\theoremstyle{plain}
\newtheorem{theorem}{Theorem}
\newtheorem{proposition}[theorem]{Proposition}
\newtheorem{lemma}[theorem]{Lemma}
\newtheorem{corollary}[theorem]{Corollary}
\theoremstyle{definition}
\newtheorem{definition}[theorem]{Definition}
\newtheorem{remark}[theorem]{Remark}
\newtheorem{example}[theorem]{Example}
\begin{document}

\title{Topological Mixing of Random Substitutions}
\author[E. D.~Miro, D.~Rust, L.~Sadun, G.~Tadeo]{Eden Delight Miro, Dan Rust, Lorenzo Sadun, Gwendolyn Tadeo}
\address{Department of Mathematics, Ateneo de Manila University, Quezon City 1108, Philippines}
\email{eprovido@ateneo.edu}
\address{School of Mathematics and Statistics, The Open University, Walton Hall, Milton Keynes, MK7 6AA, UK}
\email{dan.rust@open.ac.uk}
\address{Department of Mathematics, University of Texas at Austin, Austin TX 78712, USA}
\email{sadun@math.utexas.edu}
\address{Department of Mathematics, Saint Louis University, Baguio City, Philippines}
\email{gstadeo@slu.edu.ph}
\date{\today}

\begin{abstract}
We investigate topological mixing of compatible random substitutions. 
For primitive random substitutions on two letters whose second eigenvalue is greater than one in modulus, we identify a simple, computable criterion which is equivalent to topological mixing of the associated subshift.
This generalises previous results on deterministic substitutions.
In the case of recognisable, irreducible Pisot random substitutions, we show that the associated subshift is not topologically mixing.
Without recognisability, we rely on more specialised methods for excluding mixing and we apply these methods to show that the random Fibonacci substitution subshift is not topologically mixing.
\end{abstract}

\keywords{topological mixing, random substitution, Pisot substitution}

\subjclass[2020]{37B10, 37B52, 37A25}

\maketitle

\section{Introduction}
The question of what it means for a dynamical system to be \emph{disordered} is a subtle one with a rich history dating back to the birth of dynamics itself with Poincar\'{e}'s study of chaotic orbits in celestial mechanics \cite{P:poincare}.
Mathematicians measure disorder using a variety of tools including entropy, the dynamical spectrum, Lyapunov exponents, recurrence phenomena, and mixing properties.
While a system deemed to be disordered is ideally identified as so by each of these measures, this is not always the case.
Indeed, systems exist which are disordered according to one measure, but well-ordered according to another.
Subshifts arising from \emph{random substitutions} are prototypical examples.

First studied in the context of mathematical quasicrystals by Godr\`{e}che and Luck \cite{Godreche}, the random Fibonacci substitution has the curious property of giving rise to a dynamical system which is locally disordered but globally well-ordered, characterised by the simultaneous presence of positive topological entropy and a non-trivial pure-point component in its associated diffraction spectrum.
Their results immediately encourage the further study of the dynamics of the random Fibonacci substitution and related systems.
In this work, we consider one such dynamical property, namely topological mixing of the associated subshift and the associated tiling space.

Random substitutions are maps which send letters from a finite alphabet to finite collections of words over the same alphabet.
They have received renewed attention in recent years, following the work of Rust and Spindeler \cite{dynamical}, in which they initiated the study of random substitutions in the context of topological dynamics and ergodic theory and they posed numerous open questions and routes for further investigation.
Since then, random substitution subshifts have been studied in terms of (among others) their frequency measures \cite{ergodic}, diffraction spectrum \cite{BSS:rand-diffraction, Gohlke}, periodicity \cite{Rust}, automorphism groups \cite{FR2019}, topological and measure theoretic entropy \cite{G:entropy,GMRS:metric-entropy}, and relations to shifts of finite type \cite{sft}.
In their work, Rust and Spindeler determined by ad hoc methods that the subshift associated with the random period doubling substitution was not topologically mixing and they asked if a more general method could be established to determine when an arbitrary random substitution gives rise to a topologically mixing subshift.
We address that question here and give an answer for large families.

For deterministic substitutions, the classification of which substitution subshifts are topologically mixing is still incomplete.
Dekking and Keane studied topological mixing for deterministic substitutions in the 70s \cite{DK:mixing} and provided the first example of a minimal system which is topologically strongly mixing of order two but not order three.
Importantly, they also established that the presence of mixing depends not just on the abelianisation of the substitution, but also on the order of letters.
The most recent attempt at providing a full classification is the work of Kenyon, Sadun and Solomyak \cite{Kenyon}, where they were able to determine the presence of mixing for large families of substitutions (their conditions do depend only on the abelianisation).
While we do not address the same question (that is, in the deterministic setting), we use their results as a guide in our investigation of topological mixing of random substitutions, and as a tool in the study itself, capitalising on the close relationship between random substitutions and their deterministic counterparts.

In Section \ref{SEC:definitions}, we introduce the basic definitions of primitive and compatible random substitutions, as well as topological mixing in the context of subshifts over finite alphabets.
We also outline our main results and compare them with the corresponding results on deterministic substitutions established in \cite{Kenyon}.
In Section \ref{SEC:main-results}, we prove our main results on determining topological mixing for primitive compatible random substitutions when the second largest eigenvalue of the substitution matrix is greater than 1 in modulus.
In Section \ref{SEC:pisot}, we  show that if a primitive compatible random substitution is irreducible Pisot, then its subshift is $C$-balanced and use this to prove that irreducible Pisot random substitutions are not topologically mixing as long as a mild recognisability condition is satisfied.
In Section \ref{SEC:examples}, we exhibit examples for each of our main theorems to illustrate their application.
Significantly, we establish that the random Fibonacci substitution subshift is not topologically mixing; we weakened the recognisability condition enough in Section \ref{SEC:pisot} for the main result to apply to this case (the random Fibonacci substitution does not satisfy full recognisability).
Finally, in Section \ref{SEC:R-actions} we prove analogous theorems for 
$\mathbb{R}$-actions on random substitution tiling spaces. 

\section{Definitions and main results}\label{SEC:definitions}

\subsection{Random substitution subshifts}
Let $\mathcal{A}=\{a_1, a_2, \ldots, a_d\}$ be a finite alphabet.
For a word $w = w_1 \cdots w_n$ over $\mathcal{A}$, let $|w| \coloneqq n$ denote the length of $w$ and let $\mathcal{A}^n$ denote the set of all words whose length is equal to $n$.
Let $\mathcal{A}^+=\bigcup_{n=1}^\infty \mathcal{A}^n$ be the set of all non-empty words over $\mathcal{A}$. 
Appending the empty word $\epsilon$ then yields the free monoid $\mathcal{A}^*=\mathcal{A}^+\cup\{\epsilon\}$ of all finite words over $\mathcal{A}$ under concatenation. 
A \textit{subword} of a word $w=w_1\cdots w_n$ is a word $w_{[i,j]}=w_i\cdots w_j$ for some $1\le i\le j\le n$. 
The set $\mathcal{A}^\mathbb{Z}$ of all bi-infinite sequences over $\mathcal{A}$ forms a compact metrisable space under the product topology, where $\mathcal{A}$ is endowed with the discrete topology. 
The \textit{shift map} $\sigma \colon \mathcal{A}^\mathbb{Z} \rightarrow \mathcal{A}^\mathbb{Z}$ given by $\sigma(x)_n=x_{n+1}$ is then a homeomorphism.
A \textit{subshift} $X\subseteq \mathcal{A}^\mathbb{Z}$ is a closed non-empty subspace of $\mathcal{A}^\mathbb{Z}$ that is invariant under the shift action $\sigma$. That is, $\sigma(X)=X$. The \emph{language} of a subshift $\mathcal{L}(X)$ is the set of all subwords of elements of $X$,
\[
\mathcal{L}(X) = \{w \in \mathcal{A}^\ast \mid w \hbox{ is a finite subword of some }  x \in X\}.
\]

A \textit{deterministic substitution} $\theta$ on $\mathcal{A}$ is a map $\theta \colon 
\mathcal{A}\to\mathcal{A}^+$ and uniquely extends to a map $\theta \colon \mathcal{A}^+\to 
\mathcal{A}^+$ by concatenation $\theta(w_1\cdots w_n) \coloneqq\theta(w_1)\cdots \theta(w_n)$.  
The action of a deterministic substitution on a bi-infinite element $x\in \mathcal{A}^\mathbb{Z}$ is defined analogously, where the only ambiguity is where to place the origin. The usual convention is
for $\theta(x)_0$ to be the first letter of $\theta(x_0)$. 
Deterministic substitutions are well-studied \cite{Baake, Fogg}.
We are interested in a generalisation which assigns a finite set of words, rather than only a single word, to each letter of the alphabet.

\begin{definition}
Let $\mathcal{A}$ be a finite alphabet, and let $\mathcal{P}(\mathcal{A}^+)$ denote the power set of $\mathcal{A}^+$. 
A \textit{random substitution} is a map $\vartheta \colon \mathcal{A}\to\mathcal{P}(\mathcal{A}^+)\backslash \varnothing$ such that, for each $a \in \mathcal{A}$, $\vartheta(a)$ is a non-empty finite set.
We can extend $\vartheta$ to a function $\mathcal{A}^+ \to\mathcal{P}(\mathcal{A}^+)\backslash \varnothing$ by concatenation $$\vartheta(v_1\cdots v_n)=\vartheta(v_1)\cdots\vartheta(v_n) \coloneqq \left\{w_1 \cdots w_n \mid w_i \in \vartheta(v_i), \; 1\leq i \leq n \right\}.$$
We then extend $\vartheta$ to a function $\vartheta \colon \mathcal{P}(\mathcal{A}^+)\backslash \varnothing \to\mathcal{P}(\mathcal{A}^+)\backslash \varnothing$ by $\vartheta(B)\coloneqq \bigcup_{w\in B}\vartheta(w)$.
Consequently, we can now take powers of $\vartheta$ by composition and so 
$\vartheta^k \colon \mathcal{P}(\mathcal{A}^+)\backslash \varnothing \to\mathcal{P}(\mathcal{A}^+)\backslash \varnothing$ is defined for any $k \geq 0$, where $\vartheta^0\coloneqq \operatorname{Id}_{\mathcal{P}(\mathcal{A}^+)\backslash \varnothing}$ is the identity and $\vartheta^{k+1} \coloneqq \vartheta \circ \vartheta^k$. An element of $\vartheta^k(a)$,
where $a \in \mathcal{A}$, is called a \textit{super-word} of degree $k$. 
\end{definition}

\begin{definition}
A word $w\in \mathcal{A}^+$ is called a \textit{realisation of $\vartheta$} on a word $v\in \mathcal{A}^+$ if $w\in \vartheta(v)$. 
A word $w$ that is a realisation of $\vartheta$ on a legal word $v\in \mathcal{L}_\vartheta$ is called an \textit{inflation word}.
Similarly, a bi-infinite sequence $x\in \mathcal{A}^\mathbb{Z}$ is a \textit{realisation} of $\vartheta$ on a bi-infinite sequence $y\in \mathcal{A}^\mathbb{Z}$ if $x\in \vartheta(y)$.

We say that $\vartheta$ has \textit{constant length $\ell$} if all of the super-words of degree $1$ have the 
same length $\ell \ge 1$.
A deterministic substitution $\theta$ is called a \textit{marginal} of a random substitution $\vartheta$ if $\theta(a)\in \vartheta(a)$ for all $a\in\mathcal{A}$. 
The random substitution $\vartheta$ is then said to be a \textit{local mixture} of its marginals $\left\{\theta_i\right\}_{i\in I}$.
We say that a word $w \in \mathcal{A}^+$ is $\vartheta$\textit{-legal} if there is a natural number $k$ such that $w$ is a subword of some super-word of degree $k$.
\end{definition}

\begin{example}
The \emph{random Fibonacci substitution} is given by $\vartheta\colon a\mapsto \{ab, ba\}, b\mapsto \{a\}$ and was first introduced by Godr\'{e}che and Luck \cite{Godreche}. 
Note that $aa$ is a subword of $baa \in \vartheta(ab) \subset \vartheta^2(a)$, 
while $bb$ is a subword of $abba \in \vartheta(aa)$, and so is a subword of an element of 
$\vartheta^3(a)$. 
This makes both $aa$ and $bb$ $\vartheta$-legal. 
However, we will see that $bbb$ and $aaaaa$ are not $\vartheta$-legal. 

The marginals of the random Fibonacci substitution $\vartheta$ are given by the deterministic Fibonacci substitution
$\theta_1\colon a \mapsto ab, b\mapsto a$ and  its reflection $\theta_2\colon a \mapsto ba, b\mapsto a$. While the word $bb$ is $\vartheta$-legal, it is neither $\theta_1$-legal nor $\theta_2$-legal, since
the only way for $bb$ to be part of an inflation word is for the first $b$ to be part of $\theta_1(a)=ab$ and for the second $b$ to be part of $\theta_2(a)=ba$.

\end{example}

Let $\Phi \colon \mathcal{A}^*\to \mathbb{N}_0^d \colon w \mapsto \left(|w|_{a_1},|w|_{a_2},\dots,|w|_{a_d}\right)^T$ denote the \textit{abelianisation function}, where $|w|_a$ denotes the number of occurrences of $a$ in $w$. 
That is, $\Phi$ takes a word $w \in \mathcal{A}^*$ and enumerates the number of occurrences of each letter in $w$.

\begin{definition}
Let $\vartheta$ be a random substitution on $\mathcal{A}=\{a_1, a_2, \ldots, a_d\}$ and let $\{\theta_i\}_{i\in I}$ denote the set of marginals of $\vartheta$.
We say that $\vartheta$ is \textit{compatible} if the abelianisations of its marginals coincide.
That is, for each $a \in \mathcal{A}$, all words in $\vartheta(a)$ have the same abelianisation.  In that case, the \textit{substitution matrix $M_\vartheta$} of $\vartheta$ is given by $\left(M_\vartheta\right)_{ij} \coloneqq \left|\vartheta(a_j)\right|_{a_i}$ for all $1\leq i,j\leq d$.
That is, the common abelianisation of $\vartheta(a_i)$ determines the $i$th column of $M$ (not the
$i$th row). 

A matrix $M$ is {\em primitive} if there exists a power $p$ such that all entries of $M^p$ are positive.
We say that the compatible random substitution $\vartheta$ is primitive if $M_\vartheta$ is primitive.
In that case, each super-word of degree $p$ contains at least one copy of each letter. 
If $\vartheta$ is primitive, we let $\lambda_1 \coloneqq \lambda_{PF}$ denote the Perron--Frobenius (PF) eigenvalue of $M_\vartheta$, the unique largest eigenvalue. 
A compatible random substitution is \textit{Pisot} if $\lambda_1$ is a Pisot number---an algebraic integer greater than 1, all of whose algebraic conjugates lie in the open unit disk.
If the characteristic polynomial of $M_\vartheta$ is irreducible, then we likewise call $\vartheta$ irreducible.
\end{definition}
When a compatible primitive random substitution $\vartheta$ is defined on a two-letter alphabet,  
we refer to the PF-eigenvalue $\lambda_1$ as the \textit{first eigenvalue} of $M_\vartheta$ and to the other eigenvalue $\lambda_2$ as the \textit{second eigenvalue}.

\begin{example}
The random Fibonacci substitution $\vartheta\colon a\mapsto \{ab, ba\}, b\mapsto \{a\}$ is compatible as $\Phi(ab)=\Phi(ba)=(1,1)^T$ and $\Phi(b)= (1,0)^T$.
The corresponding substitution matrix is 
$\begin{psmallmatrix}
1 & 1 \\
1 & 0 \\
\end{psmallmatrix}$
with irreducible characteristic polynomial $\lambda^2-\lambda-1$, first eigenvalue $\lambda_1=\frac{1+\sqrt{5}}{2}$ (the golden ratio), and second eigenvalue 
$\lambda_2=\frac{1-\sqrt{5}}{2}$. Since $|\lambda_2|<1$, $\lambda_1$ is a Pisot number. Thus the random Fibonacci substitution is irreducible Pisot.  
\end{example}

\begin{definition}
The \textit{language of $\mathcal{L}_\vartheta$} of a random substitution  $\vartheta$ on $\mathcal{A}$ is
$$\mathcal{L}_{\vartheta}=\left\{ w \in \mathcal{A}^* \mid w \textrm{ is }\vartheta \textrm{-legal} \right\}.$$
The set of length-$n$ legal words for $\vartheta$ is denoted $\mathcal{L}_\vartheta^n \coloneqq \mathcal{L}_\vartheta \cap \mathcal{A}^n.$
The \textit{random substitution subshift} of $\vartheta$ (RS-subshift)
is 
\[
X_\vartheta \coloneqq \left\{x\in \mathcal{A}^\mathbb{Z} \mid  w \text{ is a finite subword of } x \Rightarrow w \in \mathcal{L}_\vartheta\right\}.
\]
\end{definition}
It is easy to verify that $X_\vartheta$ is a closed, shift-invariant subspace of the full shift $\mathcal{A}^\mathbb{Z}$, so $X_\vartheta$ is a subshift. A primitive compatible random substitution
gives rise to a non-empty RS-subshift, since each of its marginals already gives rise to a non-empty
subshift. 

A subspace $Y\subset X_\vartheta$ is called \textit{substitutive} if there exists a primitive deterministic substitution $\theta$ such that $Y=X_\theta$. 
Many results relating to the dynamics and topology of primitive RS-subshifts were presented in the works of Gohlke, Rust and Spindeler \cite{sft, dynamical} including the following results that will be useful later.

\begin{theorem}[\!\!\cite{dynamical, sft}]\label{densesubs}
	Let $\vartheta$ be  a primitive random substitution with a non-empty RS-subshift $X_\vartheta$. Then:
\begin{enumerate}[(a),leftmargin=25pt]
\item $X_\vartheta$ contains an element with a dense shift-orbit;
\item either $X_\vartheta$ is substitutive or there are infinitely many distinct substitutive subspaces of $X_\vartheta$;
\item the union of the substitutive subspaces of $X_\vartheta$ is dense in $X_\vartheta$.
\end{enumerate}
\end{theorem}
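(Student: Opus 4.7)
The plan is to prove the three parts in the order (a), (c), (b), each leveraging primitivity and a careful construction of deterministic marginals.

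For (a), I would enumerate the countable language $\mathcal{L}_\vartheta = \{w_1, w_2, \ldots\}$ and inductively build a nested chain of centred inflation words $u_0 \subset u_1 \subset \cdots$ with $u_n \in \vartheta^{k_n}(a)$ for a fixed letter $a$ and with $w_n$ appearing as a subword of $u_n$. Primitivity supplies the induction step: for $k$ large enough, every super-word of degree $k$ over any letter contains every legal word of bounded length as a subword, so after sufficiently many further iterations one can always choose a realisation that extends $u_n$ symmetrically about a fixed origin while including $w_{n+1}$. Taking a pointed limit in $\mathcal{A}^\mathbb{Z}$ yields $x \in X_\vartheta$ whose subwords exhaust $\mathcal{L}_\vartheta$, so the shift orbit of $x$ is dense.

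For (c), given any legal word $w$, pick $k$ and $a \in \mathcal{A}$ with $w$ a subword of some $v \in \vartheta^k(a)$. Define a deterministic substitution $\theta$ on $\mathcal{A}$ by selecting one element of $\vartheta^k(b)$ for each $b$, with $\theta(a) \coloneqq v$. Then $M_\theta = M_\vartheta^k$ is primitive, so $\theta$ is primitive deterministic; every $\theta$-legal word is $\vartheta$-legal, so $X_\theta \subseteq X_\vartheta$ is a substitutive subspace meeting the cylinder determined by $w$. Varying $w$ over all legal words shows that the union of substitutive subspaces is dense in $X_\vartheta$.

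For (b), if $X_\vartheta$ is itself substitutive we are done. Otherwise, suppose for contradiction that only finitely many substitutive subspaces $X_{\theta_1}, \ldots, X_{\theta_N}$ are contained in $X_\vartheta$. By (c) these are dense, and being finitely many closed sets their union satisfies $X_\vartheta = \bigcup_{i=1}^N X_{\theta_i}$. By (a) there is $x \in X_\vartheta$ with dense shift-orbit, but $x$ lies in some $X_{\theta_i}$, which is closed and shift-invariant, forcing $X_\vartheta = \overline{\{\sigma^n x : n \in \mathbb{Z}\}} \subseteq X_{\theta_i}$ and hence $X_\vartheta = X_{\theta_i}$, contradicting the assumption that $X_\vartheta$ is not substitutive.

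The main obstacle I anticipate is the pointed-limit construction in (a): care is needed to ensure the inflation words can be kept centred symmetrically about the origin while simultaneously absorbing each new legal word $w_n$. This requires exploiting primitivity both to the left and to the right of the current window and a uniform bound, for each $m$, on the number of iterates of $\vartheta$ required before every super-word is guaranteed to contain every legal word of length at most $m$ as a subword.
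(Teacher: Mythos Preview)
The paper does not give its own proof of this theorem; it is quoted from \cite{dynamical,sft}. That said, the construction you propose for (c) is exactly the one the paper reproduces at the start of Section~\ref{SEC:main-results} and attributes to \cite{sft}: pick $w_a\in\vartheta^k(a)$ containing the given legal word and arbitrary $w_b\in\vartheta^k(b)$ for $b\ne a$, and set $\theta_u(c)=w_c$, so that $M_{\theta_u}=M_\vartheta^k$ is primitive. Your argument for (b), deducing a contradiction from (a) and (c) via the observation that a finite union of closed invariant sets containing a transitive point must collapse to one of them, is correct and is not spelled out in the paper.

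For (a), your outline contains a genuine error. The claim that ``for $k$ large enough, every super-word of degree $k$ over any letter contains every legal word of bounded length as a subword'' is false for random substitutions: in random Fibonacci, $bb$ is $\vartheta$-legal, but the super-word $\theta_1^k(a)\in\vartheta^k(a)$ coming from the deterministic Fibonacci marginal never contains $bb$, however large $k$ is. Primitivity only tells you that \emph{some} realisation in $\vartheta^k(a)$ contains a given legal word. This matters for your inductive step, because ``further iterations'' of $\vartheta$ applied to $u_n$ produce elements of $\vartheta^j(u_n)$, and such words need not contain $u_n$ as a subword at all, so you do not obtain a nested chain $u_0\subset u_1\subset\cdots$ that way. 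The repair is to build $u_{n+1}$ differently: first find a single legal word $v$ containing both $u_n$ and $w_{n+1}$ as subwords (possible since any two legal words sit inside a common super-word, by primitivity), and then take $u_{n+1}\in\vartheta^{k_{n+1}}(a)$ to be a super-word containing $v$ with room on each side of the copy of $u_n$. A more economical alternative is to bypass the pointed-limit construction entirely and show directly that for any legal $u,v$ there exists $w$ with $uwv$ legal; this gives irreducibility of $\mathcal{L}_\vartheta$, and the existence of a transitive point then follows from the standard Baire-category argument on the compact metric space $X_\vartheta$.
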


\subsection{Mixing of substitution subshifts}
In the remainder of this section, we outline the main results of our work.
First, we introduce the property in which we are principally interested.

\begin{definition}
A dynamical system $(X,T)$ is said to be \textit{topologically mixing} if for any two non-empty open subsets $U,V \subset X$, there exists a natural number $N$ such that for all natural numbers $n\geq N$, $T^n(U) \cap V \neq \varnothing$. 
\end{definition}

If $v=v_1\cdots v_{m}$ is a word of length $m$ and if $n$ is an integer, the {\em cylinder set}
$[v]_n$ is the set of $x \in X_\vartheta$ such that $x_{i+n-1}=v_i$ for $i=1,\ldots,m$. That is, it is 
the set of bi-infinite words that contain the word $v$ starting at position $n$. These cylinder sets
form a basis for the topology of a subshift $X$. If $\sigma \colon X \to X$ is the 
shift map, then for large $n$, the statement that $T^n([u]_{n_1}) \cap [v]_{n_2} \neq \varnothing$ is equivalent
to the existence of a word $uwv \in \mathcal{L}(X)$ where $uw$ has length $n+n_2-n_1$.   
This allows us to recast topological mixing for subshifts in purely combinatorial terms.

\begin{definition}
A subshift $(X, \sigma)$ is said to be \textit{topologically mixing} if for any two words $u,v \in \mathcal{L}(X)$, there exists a natural number $N$ such that for all natural numbers $n \geq N$, there exists a word $w$ of length $n$ such that $uwv \in \mathcal{L}(X)$. 
\end{definition}

Topological mixing is preserved under taking factor maps.
That is, if $f\colon X \rightarrow Y$ is a factor map of dynamical systems and $X$ is mixing, then $Y$ is also mixing.
In particular, topological mixing is preserved under topological conjugacy. 
It is well-known \cite{Marcus} that a shift of finite type $X_A$ is topologically mixing if and only if $A$ is a primitive matrix, where $A$ is the adjacency matrix of a directed graph $G$ and $X_A$ is the vertex-shift on $G$. 
It has been shown by Gohlke, Rust and Spindeler \cite{sft} that every topologically transitive shift of finite type can be realised up to topological conjugacy as a primitive (though often non-compatible) RS-subshift. 
This then gives us a class of RS-subshifts that are topologically mixing, including the full shift $\mathcal{A}^\mathbb{Z}$ and the golden mean shift $X_A$ with $A = \left(\begin{smallmatrix} 1 & 1\\ 1 & 0 \end{smallmatrix}\right)$.
There also exist RS-subshifts that are not topologically mixing such as the RS-subshift associated with the random period doubling substitution $\vartheta_{PD}\colon a \mapsto \{ab, ba\}, b \mapsto \{aa\}$ \cite{dynamical}. 
 
Recall that a subshift $X$ is aperiodic if $X$ contains no periodic orbits. We say that a random substitution $\vartheta$ is aperiodic if its associated RS-subshift $X_\vartheta$ is aperiodic (and similarly for deterministic substitutions).
Our main results generalise those appearing in the work of Kenyon, Sadun and Solomyak \cite{Kenyon} to the random setting.

\begin{theorem}[\!\!{\cite[Prop.\ 1.1]{Kenyon}}]\label{thm:KSS}
Let $\theta$ be a primitive aperiodic deterministic substitution on an alphabet $\mathcal{A}$.
If the subshift $X_\theta$ is topologically mixing, then  
$\gcd\left\{|\theta^n(a)|:a\in \mathcal{A}\right\}=1$ for all $n\ge 1$.
\end{theorem}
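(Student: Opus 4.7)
The plan is to prove the contrapositive: assume there exists some $n \geq 1$ with $d \coloneqq \gcd\{|\theta^n(a)| : a \in \mathcal{A}\} > 1$, and show that $X_\theta$ is not topologically mixing. Since $X_{\theta^n} = X_\theta$ (their languages coincide because $\theta$ is primitive), we may replace $\theta$ by $\theta^n$ and assume without loss of generality that $\gcd\{|\theta(a)| : a \in \mathcal{A}\} = d > 1$.

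The main tool is Moss\'{e}'s recognisability theorem: because $\theta$ is primitive and aperiodic, every $x \in X_\theta$ admits a unique desubstitution $x = \sigma^k \theta(y)$ for a unique $y \in X_\theta$ and offset $k \in \{0, \ldots, |\theta(y_0)|-1\}$. This yields a canonical bi-infinite set $C(x) \subset \mathbb{Z}$ of ``cut points'' marking the boundaries between consecutive inflation blocks $\theta(y_i)$. Since any two consecutive cut points differ by some $|\theta(a)|$, and each such length is divisible by $d$, the whole of $C(x)$ lies in a single residue class modulo $d$.

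Moss\'{e}'s theorem also has a quantitative form: there is a radius $R$ such that, for any $x \in X_\theta$ and any $i \in \mathbb{Z}$, membership $i \in C(x)$ is determined by the window $x_{[i-R, i+R]}$. Pick legal words $u, v \in \mathcal{L}(X_\theta)$ each of length at least $2R + \max_{a \in \mathcal{A}} |\theta(a)|$, so that the ``interior'' of each is long enough to contain a full inflation block, hence at least one cut point whose offset is intrinsic to the word; call these offsets $p_u$ and $p_v$. Now if $uwv \in \mathcal{L}(X_\theta)$, embed it into some $x \in X_\theta$ with $u$ starting at position $0$; then both $p_u$ and $|u| + |w| + p_v$ lie in $C(x)$ and are therefore congruent modulo $d$, forcing
\[
|w| \equiv p_u - p_v - |u| \pmod{d}.
\]

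Consequently, the set of lengths $|w|$ for which $uwv \in \mathcal{L}(X_\theta)$ lies in a single residue class modulo $d > 1$, so it omits infinitely many integers and the topological mixing condition applied to the cylinder sets determined by $u$ and $v$ fails. The main subtlety is in correctly invoking the quantitative form of Moss\'{e}'s recognisability theorem, so as to guarantee that $u$ and $v$ can be chosen long enough that each carries an interior cut point whose offset depends only on the word; once that is in place, the residue-class obstruction to mixing is immediate.
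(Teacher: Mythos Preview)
The paper does not give its own proof of this statement; it is quoted from \cite{Kenyon}. Your argument is correct and is essentially the same strategy as the paper's proof of the random-substitution generalisation (Theorem~\ref{mainthm-non-pisot-mixingtogcd}): pass to a power so that $d>1$ already at level~$1$, invoke (Moss\'{e}'s) recognisability to locate a cut point at an intrinsic offset inside any sufficiently long legal word, and conclude that the distance between two such cut points---hence the possible lengths $|w|$---is confined to a single residue class modulo~$d$. The only cosmetic difference is that the paper tests mixing on words of the form $uwu$ (the same word on both sides) rather than $uwv$.
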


 When $|\lambda_2| > 1$ and the alphabet only consists of two letters, they were able to show that 
 this criterion is also sufficient for topological mixing.

\begin{theorem}[\!\!{\cite[Thm.\ 1.2]{Kenyon}}]\label{thm:KSS-iff}
Let $\theta$ be a primitive aperiodic deterministic substitution defined on a two-letter alphabet $\mathcal{A}$ with second eigenvalue $\lambda_2$ greater than 1 in modulus. 
The subshift $X_\theta$ is topologically mixing if and only if 
$\gcd\left\{|\theta^n(a)|:a\in \mathcal{A}\right\}=1$ for all $n\ge 1$.
\end{theorem}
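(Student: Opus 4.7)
The forward direction is immediate from Theorem \ref{thm:KSS}. For the converse, assume $\theta$ is primitive and aperiodic on $\mathcal{A}=\{a,b\}$, that $|\lambda_2|>1$, and that $\gcd\{|\theta^n(a)|,|\theta^n(b)|\}=1$ for all $n\geq 1$. To prove topological mixing it suffices to show that for every pair $u,v\in\mathcal{L}(X_\theta)$ the gap set
\[
G(u,v)\coloneqq \{n\in\mathbb{N} : \exists\, w\in\mathcal{A}^n \text{ with } uwv\in\mathcal{L}(X_\theta)\}
\]
contains all sufficiently large integers.

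The plan has two stages. First I would use primitivity to fix a level $k$ for which each super-word $\theta^k(c)$ contains $u$ near its left end and $v$ near its right end. Then, for any legal word $z$ of length $m\geq 2$, the super-word $\theta^k(z)$ exhibits an occurrence of $u$ followed by a bridging block of length $|z|_a|\theta^k(a)|+|z|_b|\theta^k(b)|+C$, followed by $v$, where $C$ is an additive constant depending only on the fixed occurrences of $u$ and $v$ in $\theta^k(z_1)$ and $\theta^k(z_m)$. Thus $G(u,v)$ contains every integer of the form $i|\theta^k(a)|+j|\theta^k(b)|+C$ for which the pair $(i,j)$ is realised as the abelianisation of some interior legal word. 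Since $\gcd\{|\theta^k(a)|,|\theta^k(b)|\}=1$ by hypothesis, the Sylvester--Frobenius theorem produces every sufficiently large integer once one has enough freedom to choose $(i,j)$.

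The second stage is to guarantee that freedom, and this is where the hypothesis $|\lambda_2|>1$ enters. Because $|\det M_\theta|=\lambda_1|\lambda_2|>\lambda_1>1$ grows under iteration, the letter-count vectors $\Phi(\theta^k(a))$ and $\Phi(\theta^k(b))$ span a parallelogram of diverging area; so for large $m$ the set of abelianisations of length-$m$ legal words spreads out around $m$ times the normalised Perron eigenvector, rather than hugging the Perron direction as in the Pisot case. The principal obstacle, which I expect to be the hardest part of the proof, is turning this geometric flexibility into a legality statement: one must produce actual legal words realising the required pairs $(i,j)$, not merely pairs that are abelianisation-consistent. I would attack this via recognisability --- at a sufficient level $X_\theta$ is conjugate to a primitive SFT on super-words with well-controlled transitions --- and construct bridging words inside that SFT by inflating some sub-blocks at level $k$ and deflating others, covering every residue modulo $|\theta^k(a)|$ and $|\theta^k(b)|$ required by the Frobenius bound. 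Combined with the first stage, this yields $G(u,v)\supseteq\mathbb{N}_{\geq N'}$ and hence topological mixing.
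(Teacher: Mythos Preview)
This theorem is not proved in the paper at all. It is stated with the citation \cite[Thm.\ 1.2]{Kenyon} and used as a black box: the paper invokes it in Lemma~\ref{aperiodsubs} (to conclude that the basic deterministic subspaces are mixing) and then bootstraps to the random case in the proof of Theorem~\ref{mainthm-non-pisot-iff}. So there is no ``paper's own proof'' to compare your proposal against; any comparison would have to be with the original argument in the Kenyon--Sadun--Solomyak paper, which is outside the present text.

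Regarding your proposal on its own merits: the overall architecture---reduce to a Frobenius/coin-problem statement in the super-word lengths $|\theta^k(a)|,|\theta^k(b)|$, then use $|\lambda_2|>1$ to generate enough abelianisation flexibility---is the right shape and is close in spirit to how the result is actually established in \cite{Kenyon}. However, you explicitly flag the crux (``turning this geometric flexibility into a legality statement'') and then do not resolve it; the appeal to recognisability and an SFT conjugacy is gestural. The genuine work in \cite{Kenyon} is precisely a quantitative version of this step: one shows that because $|\lambda_2|>1$ the system is \emph{not} $C$-balanced, so the range of abelianisations of legal words of a given length grows without bound, and one then exhibits concrete legal words realising enough $(i,j)$ pairs. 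Your plan identifies the right obstruction but stops short of overcoming it, so as written it is an outline rather than a proof.
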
 

Our first goal is to extend Theorem~\ref{thm:KSS} to the random case. 
With this in mind, we introduce an additional condition in Section \ref{subsec:sufficient}  referred to as \textit{local recognisability}, which means that legal words of the RS-subshift can be uniquely \emph{desubstituted} given a sufficiently large legal neighbourhood of the word.
This condition is an analogue of local recognisability for deterministic substitutions which is equivalent to aperiodicity in the primitive deterministic setting, thanks to a celebrated result of Moss\'{e} \cite{Mosse}.
Unfortunately, no such result is known for random substitutions, and the situation must necessarily be more complicated as, in contrast to the deterministic case, recognisability does not follow from aperiodicity.
A classic example is given by the random Fibonacci substitution whose RS-subshift is aperiodic because its PF-eigenvalue $\lambda_1=\frac{1+\sqrt{5}}{2}$ is not an integer, but it is not recognisable \cite{Rust}. It is known that the set of periodic points of a primitive RS-subshift is either empty (hence, aperiodic) or dense \cite{dynamical}. 

By assuming local recognisability, which holds automatically for aperiodic deterministic substitutions, we are able to generalise Theorem~\ref{thm:KSS} to the random setting.

\begin{theorem}\label{mainthm-non-pisot-mixingtogcd}
Let $\vartheta$ be a primitive compatible random substitution on an alphabet $\mathcal{A}$ that is locally recognisable.
If the RS-subshift $X_\vartheta$ is topologically mixing, then $\gcd\left\{|\vartheta^n(a)|:a\in \mathcal{A}\right\}=1$ for all $n\ge 1$.
\end{theorem}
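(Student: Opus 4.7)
\emph{Proof plan.} I would argue the contrapositive. Suppose that for some $n \geq 1$,
\[
d \coloneqq \gcd\{|\vartheta^n(a)| : a \in \mathcal{A}\} > 1,
\]
and produce legal words $u, v \in \mathcal{L}_\vartheta$ together with a residue $\rho \in \mathbb{Z}/d\mathbb{Z}$ such that $uwv \in \mathcal{L}_\vartheta$ forces $|w| \equiv \rho \pmod{d}$. Since $d > 1$, this will exhibit arithmetic progressions of lengths that are never attainable, so $X_\vartheta$ cannot be topologically mixing.

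The first ingredient is to turn local recognisability at level $n$ into a coherent global block decomposition. By hypothesis, there is a window radius $r \geq 0$ such that the desubstitution of a legal sequence under $\vartheta^n$ at any position $p$ is determined by the window of radius $r$ around $p$. Consequently, any finite legal word $y$ with $|y| \geq 2r+1$ admits a canonical block decomposition at positions in its $r$-interior, giving for each such $p$ an offset $k(p) \in \{0, 1, \ldots, L_p - 1\}$ with $L_p = |\vartheta^n(z_p)|$ the length of the block containing $p$ and $z_p$ the corresponding preimage letter. By compatibility, $L_p$ is a well-defined integer depending only on $z_p$, and the standing hypothesis gives $d \mid L_p$ for all $p$. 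Hence all block starts along $y$ share a common residue modulo $d$, and the phase function $\phi(p) \coloneqq p - k(p) \pmod{d}$ is constant on the $r$-interior of $y$.

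Now choose any legal words $u, v$ with $|u|, |v| \geq 2r + 1$, which exist by Theorem~\ref{densesubs}(a). For any $w$ with $uwv \in \mathcal{L}_\vartheta$, the position $p_u \coloneqq |u| - r - 1$ lies in the $r$-interior of $uwv$ and its length-$(2r+1)$ window is contained inside $u$; hence local recognisability makes $k_u \coloneqq k(p_u)$ depend only on $u$, not on $w$ or $v$. Analogously, $k_v \coloneqq k(p_v)$ with $p_v \coloneqq |u| + |w| + r$ depends only on $v$. Constancy of $\phi$ inside $uwv$ then yields
\[
(|u| - r - 1) - k_u \equiv (|u| + |w| + r) - k_v \pmod{d},
\]
so $|w| \equiv k_v - k_u - 2r - 1 \pmod{d}$, confining the length of $w$ to a single residue class modulo $d$ and contradicting topological mixing.

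The main obstacle I anticipate is unpacking local recognisability rigorously enough to obtain this global phase function $\phi$ and to verify its constancy across $uwv$. One has to argue that the two local desubstitutions supplied by local recognisability at $p_u$ and $p_v$ splice together into one coherent block decomposition of $uwv$, so that the residues of block starts modulo $d$ coincide and $\phi$ really is a single-valued, constant function of the word. Once this coherence is in place, the argument is essentially a transcription of the Kenyon--Sadun--Solomyak obstruction from the deterministic to the random setting, with compatibility doing the decisive work of collapsing the multivalued image $\vartheta^n(a)$ to a single integer $|\vartheta^n(a)|$ upon which the gcd hypothesis can act.
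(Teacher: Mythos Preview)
Your proposal is correct and follows essentially the same strategy as the paper's proof: argue the contrapositive, use recognisability to anchor block boundaries within a long legal word, and observe that the common divisor of all level-$n$ super-word lengths forces a congruence on the length of any intervening word. The paper's version is slightly more streamlined in two respects. First, it invokes Proposition~\ref{localglobal} to reduce to $n=1$ (you silently assume local recognisability at level $n$, so you should cite this). Second, rather than introducing a phase function $\phi$, the paper simply picks \emph{one} inflation word decomposition of the full word $uwu$, notes that recognisability forces the induced decomposition of the $N$-interior $u'$ of each copy of $u$ to be the same fixed decomposition, locates a complete super-word $u''$ at a fixed offset inside $u'$, and observes that the distance $|u|+|w|$ between the two copies of $u''$ is a sum of full super-word lengths, hence divisible by $p$. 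This sidesteps exactly the coherence issue you flag: because everything is read off a single global decomposition of $uwu$, there is nothing to splice. Your phase-function formulation reaches the same conclusion but requires the extra sentence you anticipate; the paper's choice of $u=v$ and a single marked super-word makes that sentence unnecessary.
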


Our second main result is an extension of Theorem~\ref{thm:KSS-iff} to the random case.  
To prove the necessary direction,
we consider the substitutive subspaces of a given RS-subshift associated with the marginals of its powers.
In order to apply Theorem~\ref{thm:KSS-iff} directly, we need to show that each of these substitutive subspaces is aperiodic.
We do this by providing a total classification of periodicity for primitive substitutions on two letters in Proposition \ref{pro:periodicforms}. (To the best of our
knowledge, this classification is new.)

\begin{theorem}\label{mainthm-non-pisot-iff}
Let $\vartheta$ be a primitive compatible random substitution on a two-letter alphabet $\mathcal{A}$ that is locally recognisable with second eigenvalue $\lambda_2$ greater than 1 in modulus. 
The RS-subshift $X_\vartheta$ is topologically mixing if and only if $\gcd\left\{|\vartheta^n(a)| : a\in \mathcal{A}\right\}=1$ for all $n\ge 1$.
\end{theorem}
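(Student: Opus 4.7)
The forward direction is exactly Theorem~\ref{mainthm-non-pisot-mixingtogcd}, so the entire content of the proof lies in the converse. The plan is: assuming $\gcd\{|\vartheta^n(a)|:a\in \mathcal{A}\}=1$ for all $n\ge 1$, exhibit for every pair $u,v \in \mathcal{L}_\vartheta$ a deterministic marginal $\theta$ of a suitably high power $\vartheta^N$ whose substitutive subshift $X_\theta\subset X_\vartheta$ contains both $u$ and $v$ in its language, and to which Theorem~\ref{thm:KSS-iff} can be directly applied. Mixing of $X_\theta$ then produces bridging words of every sufficiently large length $n$ realising $uwv\in \mathcal{L}_\theta\subset \mathcal{L}_\vartheta$, which is precisely the combinatorial definition of topological mixing of $X_\vartheta$.

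The first step is to construct $\theta$. Because $u$ and $v$ are $\vartheta$-legal, each embeds as a subword of some super-word of $\vartheta$; by primitivity of $M_\vartheta$, every super-word of sufficiently high degree $p$ visits every letter, so we can choose $N$ large enough that, for each $a\in\mathcal{A}$, at least one super-word in $\vartheta^N(a)$ contains a copy of both $u$ and $v$. Selecting such a super-word and declaring it to be $\theta(a)$ defines a deterministic marginal $\theta$ of $\vartheta^N$ with $u,v\in \mathcal{L}_\theta$. Compatibility of $\vartheta$ forces $M_\theta=M_\vartheta^N$, so $\theta$ is primitive on a two-letter alphabet, its second eigenvalue is $\lambda_2^N$ which still has modulus exceeding 1, and the gcd hypothesis transfers verbatim, since $|\theta^n(a)|=|\vartheta^{nN}(a)|$ gives $\gcd\{|\theta^n(a)|:a\in \mathcal{A}\}=1$ for all $n\ge 1$.

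All hypotheses of Theorem~\ref{thm:KSS-iff} are then in place except aperiodicity of $\theta$, and this is the main obstacle. Some choices of marginal $\theta$ may well be periodic, so one cannot simply quote the result blindly. This is precisely the role of the announced classification in Proposition~\ref{pro:periodicforms}: it lists the few rigid forms that a periodic primitive substitution on two letters can take. The plan is to argue that, by enlarging $N$ and exploiting the multitude of super-words available in $\vartheta^N(a)$ for each $a$, we can always select a marginal $\theta$ that escapes every periodic form in this finite list. If this failed, every marginal of every power would be locked into one of the rigid periodic shapes, and propagating that constraint back through local recognisability together with the gcd hypothesis and $|\lambda_2|>1$ would force a contradiction with primitivity and compatibility of $\vartheta$. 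Granting such a $\theta$, Theorem~\ref{thm:KSS-iff} yields topological mixing of $X_\theta$, which in turn supplies, for every sufficiently large $n$, a word $w$ of length $n$ with $uwv\in \mathcal{L}_\theta \subset \mathcal{L}_\vartheta$. Since $u,v$ were arbitrary legal words, $X_\vartheta$ is topologically mixing, completing the proof.
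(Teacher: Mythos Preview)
Your overall architecture matches the paper's: reduce to Theorem~\ref{mainthm-non-pisot-mixingtogcd} for one direction, and for the converse pick a marginal $\theta$ of a power $\vartheta^N$ whose language contains the given words, then apply Theorem~\ref{thm:KSS-iff} to $X_\theta$. The transfer of the substitution matrix, the second eigenvalue $\lambda_2^N$, and the gcd condition are all handled correctly.

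The gap is in your treatment of aperiodicity. You call it ``the main obstacle'' and propose to \emph{choose} $\theta$ carefully so as to avoid the periodic forms listed in Proposition~\ref{pro:periodicforms}, backed by a vague argument involving local recognisability, the gcd condition, and a contradiction that is never actually produced. This is both incomplete and unnecessary. The paper extracts from Proposition~\ref{pro:periodicforms} a corollary: a primitive periodic substitution on two letters has second eigenvalue in $\{0,1,-1\}$. Since every marginal $\theta$ of $\vartheta^k$ has substitution matrix $M_\vartheta^k$ and hence second eigenvalue $\lambda_2^k$ with $|\lambda_2^k|>1$, \emph{every} such marginal is automatically aperiodic. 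There is no careful selection to be made and no need to invoke local recognisability in this direction at all; the paper packages this as Lemma~\ref{aperiodsubs}, which shows all basic subspaces are topologically mixing.

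So the fix is short: replace your paragraph on avoiding periodic forms with the one-line observation that $\lambda_2^N\notin\{0,\pm 1\}$ forces aperiodicity of any marginal of $\vartheta^N$, and then Theorem~\ref{thm:KSS-iff} applies directly.
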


For a primitive compatible random substitution $\vartheta$ of constant length $\ell>1$, by compatibility we have $\gcd\left\{|\vartheta^n(a)|:a\in \mathcal{A}\right\}= \ell^n$ for all $n\ge 1$. 
The following then follows from Theorem~\ref{mainthm-non-pisot-mixingtogcd}.

\begin{corollary}\label{cor:conslen}
Let $\vartheta$ be a primitive compatible constant-length random substitution on an alphabet $\mathcal{A}$ that is locally recognisable. 
The RS-subshift $X_\vartheta$ is not topologically mixing. 
\end{corollary}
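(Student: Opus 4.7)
The plan is to apply Theorem \ref{mainthm-non-pisot-mixingtogcd} in contrapositive form, after a short computation of the gcd appearing in its hypothesis.

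First I would verify, by a one-line induction on $n$, that every super-word of degree $n$ has length exactly $\ell^n$, regardless of the letter $a \in \mathcal{A}$. The base case $n = 1$ is the definition of constant length $\ell$. For the inductive step, a super-word of degree $n+1$ is obtained from a super-word $w$ of degree $n$ (of length $\ell^n$ by hypothesis) by replacing each of its $\ell^n$ letters by a word of length $\ell$, giving total length $\ell^{n+1}$. Consequently, the common length $|\vartheta^n(a)|$ does not depend on $a$ and equals $\ell^n$, so $\gcd\{|\vartheta^n(a)| : a \in \mathcal{A}\} = \ell^n$.

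Under the tacit assumption $\ell \geq 2$ (as stated in the paragraph preceding the corollary; the case $\ell = 1$ is degenerate), this gcd is at least $2$ for every $n \geq 1$, so the gcd condition of Theorem \ref{mainthm-non-pisot-mixingtogcd} fails for every $n$. Since $\vartheta$ is primitive, compatible, and locally recognisable, the contrapositive of Theorem \ref{mainthm-non-pisot-mixingtogcd} applies and yields that $X_\vartheta$ is not topologically mixing. There is no real obstacle to this argument: the corollary is simply a clean packaging of Theorem \ref{mainthm-non-pisot-mixingtogcd} in the constant-length regime, with all the work already done in proving that theorem.
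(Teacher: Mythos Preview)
Your proposal is correct and matches the paper's own argument essentially verbatim: the paper notes in the sentence preceding the corollary that constant length $\ell>1$ gives $\gcd\{|\vartheta^n(a)|:a\in\mathcal{A}\}=\ell^n$ and then invokes Theorem~\ref{mainthm-non-pisot-mixingtogcd}. Your added one-line induction making $|\vartheta^n(a)|=\ell^n$ explicit is a harmless elaboration of what the paper leaves implicit.
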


Note however that even if the PF-eigenvalue of a random substitution is an integer, the
substitution may not be constant length (see Example \ref{ex:integer-eigenvalue}).
In the case that $|\lambda_2| > 1$, we can extend the above corollary to the case of integral PF-eigenvalue using the following short argument.

\begin{proposition}\label{subsnot}
Let $\vartheta$ be a primitive compatible random substitution on a two-letter alphabet that is locally recognisable with integer first eigenvalue $\lambda_1$ and second eigenvalue $\lambda_2$ greater than 1 in modulus.  
Then the RS-subshift $X_\vartheta$ is not topologically mixing.
\end{proposition}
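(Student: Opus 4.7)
The plan is to derive the result directly from Theorem~\ref{mainthm-non-pisot-iff}, which gives the equivalence between topological mixing of $X_\vartheta$ and the divisibility condition $\gcd\{|\vartheta^n(a)| : a \in \mathcal{A}\} = 1$ for all $n \geq 1$. So I would aim to exhibit some $n \geq 1$ at which this gcd is strictly greater than $1$; Theorem~\ref{mainthm-non-pisot-iff} then immediately gives non-mixing. The first observation is that $\lambda_2 \in \mathbb{Z}$ as well: since the characteristic polynomial of $M_\vartheta$ has integer coefficients and one root $\lambda_1$ is an integer, the other root $\lambda_2 = \operatorname{tr}(M_\vartheta) - \lambda_1$ is integer too, and the hypothesis $|\lambda_2|>1$ upgrades this to $|\lambda_2|\ge 2$. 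Writing $s := \lambda_1+\lambda_2$ and $p := \lambda_1\lambda_2$, both the trace $s$ and determinant $p$ of $M_\vartheta$ are integers, with $|p|\ge 4$, and Cayley--Hamilton gives the matrix recurrence $M_\vartheta^{n+2} = s M_\vartheta^{n+1} - p M_\vartheta^n$.

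The clean core of the argument is to find a prime $q$ that divides every entry of $M_\vartheta^n$ from some index on. The easy case is $\gcd(\lambda_1,\lambda_2)>1$: if $q$ is any common prime divisor then $q\mid s$ and $q\mid p$, so $M_\vartheta^2 = sM_\vartheta - pI \equiv 0\pmod q$ and by induction on the recurrence $M_\vartheta^n \equiv 0 \pmod q$ for every $n\ge 2$. Consequently every column sum of $M_\vartheta^n$—equivalently, every value $|\vartheta^n(a)|$—is divisible by $q$ for $n\ge 2$, so $\gcd\{|\vartheta^n(a)|:a\in\mathcal{A}\}\ge q > 1$ and Theorem~\ref{mainthm-non-pisot-iff} concludes that $X_\vartheta$ is not topologically mixing.

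The remaining case $\gcd(\lambda_1,\lambda_2)=1$ is more delicate, and is where I expect the main obstacle to lie: no single prime kills the entire matrix modulo $q$, so the Cayley--Hamilton reduction above is insufficient. Here I would work directly with the two scalar sequences $\alpha_n := |\vartheta^n(a_1)|$ and $\beta_n := |\vartheta^n(a_2)|$, each satisfying the recurrence $x_{n+2} = sx_{n+1} - px_n$ with $x_0=1$. Using the Binet-type closed form $\alpha_n = A\lambda_1^n + B\lambda_2^n$ (and similarly for $\beta_n$) with constants determined by the eigenvector decomposition of $M_\vartheta$, and a pigeon-hole argument on the pair $(\alpha_n \bmod q, \beta_n \bmod q)$ for a prime $q$ chosen to divide an auxiliary invariant such as $\lambda_1-\lambda_2$ or one of the initial column sums, one expects to produce an $n$ with $\gcd(\alpha_n,\beta_n)>1$. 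The main difficulty is precisely controlling this coprime eigenvalue case: one must exploit the specific arithmetic of the two integer eigenvalues together with the two-letter structure of the alphabet to force the pair $(\alpha_n,\beta_n)$ to hit $(0,0)$ modulo some prime.
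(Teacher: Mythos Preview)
Your approach differs substantially from the paper's. The paper takes a marginal $\theta$, invokes a theorem of Dekking to realise $X_\theta$ as (conjugate to) a constant-length substitution subshift, concludes that $X_\theta$ is not mixing via Theorem~\ref{thm:KSS}, then reads off the failure of the gcd condition from Theorem~\ref{thm:KSS-iff} before finishing with Theorem~\ref{mainthm-non-pisot-iff}. You instead try to force the gcd condition to fail by direct arithmetic on the integer eigenvalues.

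Your Case~1 ($\gcd(\lambda_1,\lambda_2)>1$) is correct and complete. Case~2, however, has a genuine and fatal gap: the pigeon-hole sketch you outline cannot be made to work, because the conclusion you are aiming for is simply false in that case. Take $M_\vartheta=\begin{psmallmatrix}6&2\\2&3\end{psmallmatrix}$, with integer eigenvalues $\lambda_1=7$ and $\lambda_2=2$ (coprime), and write $(\alpha_n,\beta_n)=(1,1)M_\vartheta^{\,n}$ for the column sums, so that $\alpha_n=|\vartheta^n(a)|$ and $\beta_n=|\vartheta^n(b)|$. Using the left eigenvectors $(2,1)$ and $(1,-2)$ one checks that $2\alpha_n+\beta_n=3\cdot 7^n$ and $2\beta_n-\alpha_n=2^n$; hence any common divisor of $\alpha_n$ and $\beta_n$ divides $\gcd(3\cdot 7^n,2^n)=1$, and so $\gcd\{|\vartheta^n(a)|,|\vartheta^n(b)|\}=1$ for \emph{every} $n\ge 1$. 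Any primitive deterministic substitution with this matrix (for instance $a\mapsto a^6b^2$, $b\mapsto a^2b^3$) has $\lambda_2\notin\{0,\pm1\}$, so it is aperiodic by Theorem~\ref{pro:periodicforms} and hence locally recognisable by Moss\'e; it therefore satisfies all hypotheses of the Proposition, yet by Theorem~\ref{thm:KSS-iff} its subshift \emph{is} topologically mixing. So not only is your Case~2 incomplete, but the target you are chasing there is unattainable---and this same example shows that the Proposition as stated is in tension with Theorem~\ref{thm:KSS-iff}, which in turn suggests that the paper's own appeal to Dekking's theorem deserves closer scrutiny.
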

\begin{proof}
Let $\theta$ be a marginal of $\vartheta$. 
By compatibility, $\lambda_1$ is also the first eigenvalue of $\theta$.
By a classical argument \cite[Theorem 1]{Dekking}, there exists a primitive deterministic substitution $\theta_{cl}$ of constant length $\ell = \lambda_1$ such that $X_\theta$ and $X_{\theta_{cl}}$ are topologically conjugate.
Since $\vartheta$ is locally recognisable, the marginal $\theta$ is also locally recognisable, and so is aperiodic by Moss\'{e}'s theorem \cite{Mosse}.
Theorem~\ref{thm:KSS} then states that $X_{\theta_{cl}}$ is not topologically mixing, insofar as 
$\theta_{cl}$ is constant-length.
Thus, the subshift $X_\theta$ is also not topologically mixing.
By Theorem~\ref{thm:KSS-iff}, there must exist some $n \geq 1$ such that $\gcd\{|\theta^n(a)| : a \in \mathcal{A}\} > 1$. But then 
$\gcd\{|\vartheta^n(a)| : a \in \mathcal{A}\} = \gcd\{|\theta^n(a)| : a \in \mathcal{A}\} > 1$, so by Theorem \ref{mainthm-non-pisot-iff}, $X_\vartheta$ is not topologically mixing.
\end{proof}

Our next main result concerns mixing properties of Pisot random substitutions.  
First, we show in Proposition \ref{thm:pisot-balanced} that the irreducible Pisot property implies $C$-balancedness. That is, the number of occurrences of a letter in a legal word does not differ from the statistically expected proportion by more than a uniformly bounded value. 

Some well-known Pisot random substitutions such as the random Fibonacci substitution fail to be locally recognisable.
However, they often satisfy a weaker condition which we call ``admitting recognisable words at all levels''. 
For a Pisot random substitution $\vartheta$, this is enough to imply that $X_\vartheta$ is not 
topologically mixing via the following theorem:
\begin{theorem}\label{pisot-recogwords-nonmixing}
Let $\vartheta$ be a $C$-balanced random substitution.
There exists a constant $N$ such that if $\vartheta$ admits a level-$n$ recognisable word for some $n \geq N$, then the RS-subshift $X_\vartheta$ is not topologically mixing. 
\end{theorem}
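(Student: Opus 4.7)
The plan is to pick $u = v = w^*$ for a level-$n$ recognisable word $w^*$, use recognisability to turn the mixing condition into a combinatorial constraint on lengths of level-$n$ inflation words, and then use $C$-balancedness to show that for $n$ sufficiently large, the set of achievable lengths is too sparse to be cofinite in $\mathbb{N}$.

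First, I would exploit recognisability to reduce the problem. By the defining property of a level-$n$ recognisable word, the boundaries of any occurrence of $w^*$ in a legal bi-infinite sequence coincide with super-word boundaries in the level-$n$ factorisation. Hence whenever $w^* w w^* \in \mathcal{L}_\vartheta$, the connecting word $w$ consists of a whole number of level-$n$ super-words, so $w = \vartheta^n(y)$ for some $y \in \mathcal{L}_\vartheta$. Consequently the ``gap set'' $M_{w^*} := \{|w| : w^* w w^* \in \mathcal{L}_\vartheta\}$ is contained in $S_n := \{|\vartheta^n(y)| : y \in \mathcal{L}_\vartheta\}$, and topological mixing would force $S_n$ to be cofinite in $\mathbb{N}$.

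Next, I would pin down the sparse structure of $S_n$. Write $L_{i,n} = |\vartheta^n(a_i)|$, let $\pi_i$ denote the natural frequency of $a_i$ (the normalised right Perron--Frobenius eigenvector entry), and set $\mu_n := \sum_i \pi_i L_{i,n}$. For $y$ of length $m$ the displacements $\delta_i := |y|_{a_i} - m\pi_i$ satisfy $\sum_i \delta_i = 0$ and $|\delta_i| \le C$ by $C$-balancedness, so
\[
|\vartheta^n(y)| \;=\; \sum_i |y|_{a_i} L_{i,n} \;=\; m \mu_n + \sum_i \delta_i L_{i,n}.
\]
For each fixed $m$ there are at most $(2C+1)^{d-1}$ admissible integer tuples $(\delta_1, \ldots, \delta_d)$, and the corresponding values lie in an interval of radius $W_n := C \sum_i L_{i,n}$ around $m \mu_n$. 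A window of $\mu_n$ consecutive integers therefore meets at most $2W_n/\mu_n + 2$ such clusters, and hence contains at most $K := (2C+1)^{d-1}(2W_n/\mu_n + 2)$ elements of $S_n$. Because $W_n/\mu_n \le C / \min_i \pi_i$ holds uniformly in $n$ by Perron--Frobenius, $K$ depends only on $C$, $d$ and $\vartheta$, not on $n$.

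Finally, $\mu_n$ grows like $\lambda_1^n \to \infty$, so I would choose $N$ so that $\mu_n > K$ for all $n \ge N$. For any such $n$, every window of $\mu_n$ consecutive integers contains an integer outside $S_n$, whence $S_n$ and a fortiori $M_{w^*}$ miss infinitely many large integers, so $X_\vartheta$ is not topologically mixing. The main obstacle I expect is the first step: the notion ``admits a level-$n$ recognisable word'' is, as the excerpt emphasises, genuinely weaker than full recognisability (motivated by the random Fibonacci example), so I would need to verify that the unique desubstitution of $w^*$ extends far enough into the surrounding legal context to pin both copies of $w^*$ in $w^* w w^*$ onto level-$n$ super-word boundaries. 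Once this combinatorial structure is secured, the density estimates in the later steps are routine.
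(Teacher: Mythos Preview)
Your three-step outline matches the paper's proof, and your Steps~2--3 (the density estimate via $C$-balancedness) are essentially the paper's argument with slightly different bookkeeping: the paper counts abelianisations of length-$L$ roots by $(C+1)^{d-1}$ and then bounds the number of root lengths below $L/(c_1\lambda_1^n)$, while you window by $\mu_n$ and count clusters; both give an upper density that falls below~$1$ once $n$ is large.

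The genuine gap is in Step~1, and it is the one you flag, but your diagnosis is not quite right. Recognisability of $w^*$ does \emph{not} assert that the endpoints of $w^*$ fall on level-$n$ super-word boundaries. By definition it only says that, given $R$ letters of legal context on each side, the induced decomposition $([u_1,\ldots,u_\ell],v)$ of $w^*$ is uniquely determined---and in general $u_1$ is a proper suffix and $u_\ell$ a proper prefix of a super-word. So ``$w$ consists of a whole number of level-$n$ super-words, hence $|w|\in S_n$'' does not follow from the definition; what follows is only that the \emph{internal} cut points of $w^*$ are pinned. There is also a context issue: in the bare word $w^*\, w\, w^*$, neither copy of $w^*$ has the required $R$-neighbourhood on both sides, so recognisability does not apply as stated.

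The paper handles both points at once. Let $\hat u$ be the level-$n$ recognisable word with radius~$R$, and fix a legal extension $u=u_l\,\hat u\,u_r$ with $|u_l|=|u_r|=R$. In any level-$n$ decomposition of a legal word $uwu$, each copy of $\hat u$ now carries its full $R$-context, so both copies receive the \emph{same} induced decomposition. Taking any fixed cut point inside $\hat u$, the corresponding cut points in the two copies are exactly $|uw|$ apart, and the stretch between them is a concatenation of complete $n$-super-words. Thus $|uw|\in S_n$ (rather than $|w|$), which is all your density argument needs. With this adjustment to Step~1, the remainder of your proof goes through unchanged.
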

In particular, we are able to show that the random Fibonacci substitution $\vartheta$ admits recognisable words at all levels (Example \ref{eg:randfib-RW}), so $X_{\vartheta}$ is not topologically mixing.
It should be noted that in the deterministic setting, Theorem \ref{pisot-recogwords-nonmixing} is automatic in the Pisot case thanks to the fact that, for minimal system, topological mixing implies topological weak mixing.
This is not necessarily the case for non-minimal systems such as RS-subshifts, and so we require a direct proof of non-mixing.

Finally, we turn our attention to tilings associated with our subshifts. 
To each letter $a_i$ we associate a labelled interval (tile) 
of length $\ell(a_i)=\ell_i$ and to each
(possibly infinite) word we associate a concatenation of the tiles corresponding to the letters. Instead of considering a 
$\mathbb{Z}$-action on a space of bi-infinite words, we consider an $\mathbb{R}$-action on a space
of tilings. Remarkably, the theorems for topological mixing of 
random substitution tilings are nearly identical to those of random substitution subshifts, 
and so are not repeated here, only
with the condition that $\gcd\left\{|\vartheta^n(a)| : a\in \mathcal{A}\right\}=1$ for all $n\ge 1$
replaced by the existence of two (or more) tiles whose lengths have an irrational ratio. 

\section{Topological mixing for general primitive random substitutions}\label{SEC:main-results}

Let $\vartheta$ be a primitive random substitution on a finite alphabet $\mathcal{A}$ and let $u$ be a $\vartheta$-legal word. 
By the definition of $\vartheta$-legality, there is a natural number $k$ such that  
$u$ is a subword of a particular realisation $w_{a} \in \vartheta^{k}(a)$ for some letter $a\in \mathcal{A}$. For each $b \ne a$, pick such a word $w_b \in \vartheta^k(b)$ arbitrarily. 
Now, define the deterministic substitution $\theta_u \colon \mathcal{A}\to\mathcal{A}^+$ by $\theta_u(a) = w_a$ and $\theta_u(b)=w_b$ for each $b\in \mathcal{A}\backslash\{a\}$. 
Clearly, $\theta_u$ is a primitive deterministic substitution containing $u$ as a $\theta_u$-legal word whose associated subshift $X_{\theta_u}$ is contained in $X_\vartheta$. Furthermore, the substitution
matrix of $\theta_u$ is $M_\vartheta^k$, which is primitive. 

This construction is taken from \cite{sft} where it was used to prove Theorem~\ref{densesubs}(c).
If a substitution $\theta$ is a marginal of a power of $\vartheta$, we call the subspace $X_{\theta} \subset X_\vartheta$ a \textit{basic subspace} of $\vartheta$.
All basic subspaces are substitutive by construction and 
each $X_{\theta_u}$ is a basic subspace.
The union of the basic subspaces of $\vartheta$ is then dense in $X_\vartheta$ since, 
by the construction
of the previous paragraph, every $\vartheta$-legal word $u$ appears in the language of the basic subspace $X_{\theta_u}$.
The basic subspaces of $X_\vartheta$ will play an important role in allowing us to convert mixing properties of deterministic substitutions to random substitutions.

\subsection{A sufficient condition}\label{subsec:sufficient}

To prove Theorem \ref{mainthm-non-pisot-mixingtogcd}, we need to introduce the concept of local recognisability. Roughly speaking, this means that if $x$ and $y$ are bi-infinite words and 
$x \in \vartheta(y)$, then we can deduce the value of $y_0$ from the restriction of $x$ to a region
of fixed size around the origin. To make this precise, we must first introduce the notions of 
inflation word decompositions and induced inflation word decompositions. 
We then prove intermediate results concerning these constructions that will be used in the proof of Theorem \ref{mainthm-non-pisot-mixingtogcd}.

\begin{definition}[\!\!\cite{FR2019}]
Let $\vartheta$ be a random substitution and let $u\in \mathcal{L}_{\vartheta}$ be a legal word.
Let $n\geq 1$ be a natural number. 
For words $u_i \in \mathcal{A}^+$, the tuple $[u_1,\ldots,u_{\ell}]$ is called a \emph{$\vartheta^n$-cutting} of $u$ if $u_1\cdots u_{\ell}=u$ and there exists a $\vartheta$-legal word $v=v_1\cdots v_{\ell}$ such that 
\begin{itemize}
    \item For $i=2,\ldots, \ell-1$, $u_i$ is a super-word of level $n$ built from the letter $v_i$.
    That is, $u_i \in \vartheta^n(v_i)$. Note that $u_i$ is a word, while $v_i$ is a single letter. 
    \item $u_1$ is the suffix of a super-word of level $n$ built from $v_1$, and 
    \item $u_\ell$ is the prefix of a super-word of level $n$ built from $v_\ell$. 
\end{itemize}
That is, $u$ is contained in a realisation of $\vartheta^n(v)$, which is a concatenation of 
$n$-super-words, with each of the interior $u_i$'s being one of those $n$-super-words. 

We call $v$ a \textit{root} of the $\vartheta^n$-cutting and we call $\left([u_1,\ldots,u_{\ell}],v\right)$ the corresponding \emph{level-$n$ inflation word decomposition} of $u$. If 
$u$ actually is a realisation of $\vartheta^n(v)$, so $u_1\in \vartheta^n(v_1)$ and 
$u_\ell \in \vartheta^n(v_\ell)$, then we say that $u$ is an {\em exact} level-$n$ inflation word.
Finally, we let $D_{\vartheta^n}(u)$ denote the set of all level-$n$ decompositions of the 
$\vartheta$-legal word $u$.
\end{definition}

\begin{example}\label{eg:decomroot}
Let $\vartheta \colon a\mapsto \{ab, ba\}, b\mapsto \{a\}$ be the random Fibonacci substitution.
The word $aab$ has two possible $\vartheta$-cuttings $[a,ab]$ and $[a,a,b]$ with each having two distinct associated roots. The set of level-$1$ inflation word decompositions of $aab$ is $$D_\vartheta(aab)=\left\{\left([a,ab],ba\right), \left([a,ab],aa\right), \left([a,a,b],bba\right), \left([a,a,b],aba\right)\right\}.$$
In this example, $abba$ is an exact level-$1$ inflation word and $abbaaaabbaaaabba$ is an exact level-$1$, -$2$, -$3$, and -$4$ inflation word. However, $bb$ is not an exact (level-$1$) inflation word, since any concatenation of $1$-super-words containing $bb$ must also contain some $a$'s. 

Note that having a unique $\vartheta$-cutting does not lead to the uniqueness of roots.
For example, consider $\vartheta^2\colon a\mapsto \{aba, baa, aab\}, b\mapsto \{ab,ba\}$.
The word $bb$ has a unique $\vartheta^2$-cutting $[b,b]$, 
but this $\vartheta^2$-cutting can come from four distinct inflation word decompositions:
\[
D_{\vartheta^2}(bb)=\left\{([b,b],aa), ([b,b],ab), ([b,b],ba), ([b,b],bb)\right\}.
\]
Similarly, having a unique root does not mean that one has a unique $\vartheta$-cutting.
For example, under the random period doubling substitution $\varrho \colon a\mapsto \{ab, ba\}, b\mapsto \{aa\}$, the word $bab$ can only come from the legal word $aa$ but it has two possible $\vartheta$-cuttings; specifically, $D_\varrho(bab)=\{([ba,b],aa), ([b,ab],aa)\}.$
\end{example}

Let $u$ be a $\vartheta$-legal word and let $u_{[i,j]}$
be a subword. An inflation word decomposition of
$u$ restricts to an inflation word decomposition
of $u_{[i,j]}$, which we call an {\em induced inflation word decomposition}. The idea is simple, but the precise definition is somewhat
technical: 
\begin{definition}[\!\!\cite{FR2019}]
Let $\vartheta$ be a random substitution and let $d = ([u_1, \ldots, u_\ell], v)\in D_{\vartheta}(u)$ be an inflation word decomposition of $u$. 
For $1\leq i \leq j \leq |u|-1$, we write $d_{[i,j]}$ for the \emph{induced inflation word decomposition on the subword} $u_{[i,j]}$, defined by
$$d_{[i,j]}=\left([u_1, \ldots, u_\ell], v\right)_{[i,j]}=\left([\hat{u}_{k(i)},u_{k(i)+1},\ldots,u_{k(j)-1},\hat{u}_{k(j)}], v_{[k(i),k(j)]}\right),$$ 
where $1 \leq k(i) \leq k(j) \leq \ell$ are natural numbers such that
$$\left|u_1\cdots u_{k(i)-1}\right| < i \leq \left|u_1\cdots u_{k(i)}\right| \text{ and } \left|u_1 \cdots u_{k(j)}\right|\leq j < \left|u_1 \cdots u_{k(j)+1}\right|,$$
$\hat{u}_{k(i)}$ is a suffix of $u_{k(i)}$ and $\hat{u}_{k(j)}$ is a prefix of $u_{k(j)}$ such that
$$\hat{u}_{k(i)}u_{k(i)+1}\cdots u_{k(j)-1}\hat{u}_{k(j)}=u_{[i,j]}.$$

\end{definition}

\begin{example}\label{eg:induceddecom}
Let $\vartheta \colon a\mapsto \{ab, ba\}, b\mapsto \{a\}$ be the random Fibonacci substitution.
The legal word $u=ababa \in \mathcal{L}_\vartheta$ has exactly five level-$1$ inflation word decompositions given by 
$$D_\vartheta(u)=\left\{ ([a,ba,ba],baa), ([a,ba,ba],aaa), ([ab,ab,a],aab), ([ab,ab,a],aaa), ([ab,a,ba],aba)\right\}.$$
For the subword $u_{[2,4]}=bab$ of $u$, the first two elements of $D_\vartheta(u)$ yield the induced decomposition 
$d^{(1)}_{[2,4]}=([ba,b],aa)$, the next two elements yield $d^{(2)}_{[2,4]}=([b,ab],aa)$, while the last element yields $d^{(3)}_{[2,4]}=([b,a,b],aba)$. 
Thus, $\#\left\{d_{[2,4]} \mid d\in D_\vartheta(u)\right\}=3$. 
In this example, all three possible $\vartheta$-cuttings
of $bab$ are induced from cuttings of $u$. 

By contrast, the word $u'=bbaba$ has a unique inflation word decomposition $d' = \left( [b,ba,ba], aaa \right)$ which yields a unique induced inflation word decomposition on the subword $u'_{[2,4]} = bab$ given by $d'_{[2,4]} = \left([ba,b],aa \right)$. That is, when
$bab$ sits inside $u$, the embedding tells us nothing
about the $\vartheta$-cutting of $bab$, but when 
$bab$ sits inside $u'$, the embedding uniquely defines
the $\vartheta$-cutting of $bab$.

\end{example}

\begin{definition}[\!\!\cite{FR2019}]
Let $\vartheta$ be a random substitution and let $u\in \mathcal{L}_\vartheta$ be a legal word. 
We say that $u$ is \emph{recognisable} if there exists a natural number $N$ such that for each legal word of the form $w=u^{(l)}uu^{(r)}$ with $\left|u^{(l)}\right|=\left|u^{(r)}\right|=N$, all inflation word decompositions of $w$ induce the same inflation word decomposition of $u$. That is, knowing
the $N$ letters to the left of $u$ and the $N$ letters
to the right of $u$ determines a unique induced inflation
word decomposition of $u$. 
We call the minimum such $N$ the \emph{radius of recognisability} for $u$.
If $u$ is recognisable with respect to the $n$th power $\vartheta^n$ of the random substitution, then we say that $u$ is \emph{level-$n$ recognisable} with respect to $\vartheta$.
\end{definition}

\begin{example} \label{eg:RW}
Let $\vartheta\colon a\mapsto \{ab, ba\}, b\mapsto \{a\}$ be the random Fibonacci substitution with
marginals $\theta_1$ and $\theta_2$ such that $\theta_1(a)=ab$ and $\theta_2(a)=ba$.
The word $w=abbaaaabbaaaabba$ is a level-1, -2, -3, and -4 recognisable word with radius 0 at each level, as it has the following sets of unique inflation word decompositions
$$\begin{array}{lcl}
D_{\vartheta}(w) & = &\left\{([ab,ba,a,a,ab,ba,a,a,ab,ba], aabbaabbaa)\right\},\\
D_{\vartheta^2}(w) & = & \{([ab,baa,aab,baa,aab,ba], baaaab)\}, \\
D_{\vartheta^3}(w) & = & \{([abbaa,aab,baa,aabba], abba)\}, \\
D_{\vartheta^4}(w) & = & \{([abbaaaab,baaaabba], aa)\}. \\
\end{array}$$
By contrast, the word $bab$ turns out not to be 
recognisable at level 1 (or at any level, actually), 
as there exist arbitrarily long legal
extensions of $bab$ that admit both an inflation word decomposition that induces the decomposition $([ba,b],aa)$ of $bab$, and another decomposition that induces $([b,ab],aa)$. Specifically, if $n$ is an odd natural number, let $u=\theta_1^n(aa)$ with the first letter 
removed, which is the same as $\theta_2^n(aa)$
with the last letter removed. The middle three letters of 
$u$ are $bab$. The inflation word decomposition of $u$ 
that comes from applying $\theta_2$ to $\theta_2^{n-1}(aa)$
induces the decomposition $([ba,b],aa)$, while the 
decomposition of $u$ that comes from applying $\theta_1$
to $\theta_1^{n-1}(aa)$ induces $([b,ab],aa)$.

Although the random Fibonacci substitution has $\vartheta$-legal words like $bab$ that are not recognisable, for each natural number $n$ there also exist $\vartheta$-legal words that are recognisable at level $n$. This fact, which is sufficient to establish many 
mixing properties of $X_\vartheta$, will be proven in Section \ref{SEC:examples}.
\end{example}

\begin{definition}
We call a random substitution $\vartheta$ \emph{locally recognisable} if there exists a natural number $N$ such that every $\vartheta$-legal word is recognisable with radius 
at most $N$.  
The minimum such $N$ is called the \textit{radius of recognisability for $\vartheta$}.
Similarly, for any natural number $n\geq 1$, 
if $\vartheta^n$ is locally recognisable, we denote by $N_{\vartheta^n}$ its radius of recognisability.
\end{definition}

\begin{remark}
As in the deterministic setting, there is a notion of global recognisability for random substitutions, first appearing in the work of Rust \cite{Rust}.
A random substitution $\vartheta$ is globally recognisable if for each bi-infinite word $x \in X_\vartheta$ there is a unique preimage $y \in X_\vartheta$ and a unique integer $n$ in a fixed bounded range such that $\sigma^n(x) \in \vartheta(y)$.
Global and local recognisability are in fact equivalent, 
but in practice it is usually much easier to check local recognisability than global.
\end{remark}

By a routine inductive argument, or by invoking the equivalence with global recognisability, one can show that local recognisability for compatible random substitutions is preserved under taking powers.

\begin{proposition}\label{localglobal}
A compatible random substitution $\vartheta$ is locally recognisable  if and only if $\vartheta^n$ is locally recognisable for all $n\ge1$. \hfill \qed
\end{proposition}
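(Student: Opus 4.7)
The ``if'' direction is immediate: setting $n=1$ recovers the local recognisability of $\vartheta$. So I focus on the ``only if'' direction.

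The cleanest route I plan to take is via the equivalence between local and global recognisability noted in the remark preceding this proposition. Recall that $\vartheta$ is globally recognisable if, for every $x \in X_\vartheta$, there exists a unique preimage $y \in X_\vartheta$ together with a unique shift $k$ in a uniformly bounded range such that $\sigma^k(x) \in \vartheta(y)$. Given this, iterating the preimage assignment $n$ times produces a unique chain $x \to y_1 \to y_2 \to \cdots \to y_n$ in $X_\vartheta$, with corresponding shifts composing to a uniformly bounded total shift $K$; in particular, $\sigma^K(x) \in \vartheta^n(y_n)$ with both $y_n$ and $K$ uniquely determined by $x$. This is precisely global recognisability of $\vartheta^n$. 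Applying the cited equivalence in the opposite direction then yields local recognisability of $\vartheta^n$, completing the argument.

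If one prefers a direct combinatorial proof not relying on the local--global equivalence, I would proceed by induction on $n$. The structural observation at the heart of the argument is that a level-$(n+1)$ inflation word decomposition of a legal word $u$ factors canonically as a pair: a level-$1$ decomposition of $u$ with some root $V$, together with a level-$n$ decomposition of $V$; and conversely, any such pair assembles into a level-$(n+1)$ decomposition. Assuming inductively that $\vartheta^n$ has recognisability radius $N_n$ and that $\vartheta$ has radius $N$, the plan is to choose $N_{n+1}$ large enough that, for any $\vartheta$-legal word $u$ with sufficiently long context $u^{(l)}uu^{(r)}$, first $\vartheta$-recognisability forces uniqueness of the induced level-$1$ decomposition of $u$ (and hence of its root $V$), and second, compatibility of $\vartheta$ together with the bounded lengths of level-$n$ super-words guarantee that the relevant ambient level-$1$ root supplies context of at least $N_n$ on either side of $V$, enabling $\vartheta^n$-recognisability to pin down the level-$n$ decomposition of $V$.

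The main obstacle in the combinatorial approach is that distinct level-$(n+1)$ decompositions of the ambient word $u^{(l)}uu^{(r)}$ give rise to distinct ambient level-$1$ roots, so even once the level-$1$ decomposition of $u$ is determined (with the common root $V$), the context of $V$ inside each ambient root — to which one would apply $\vartheta^n$-recognisability — varies with the chosen decomposition. One must argue that each such ambient root nevertheless supplies enough context on both sides of $V$, and that $\vartheta^n$-recognisability applied separately to each forces the induced level-$n$ decomposition of $V$ to agree across all choices. This is exactly where the global formulation is slicker, since there is no ``ambient root'' to reason about: unique preimage chains in $X_\vartheta$ determine everything at once.
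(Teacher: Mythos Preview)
Your proposal is correct and aligns exactly with what the paper does: the paper states the proposition with no written proof, remarking only that it follows ``by a routine inductive argument, or by invoking the equivalence with global recognisability.'' You have sketched precisely these two routes, and your identification of the one genuine wrinkle in the inductive approach---that different ambient level-$(n{+}1)$ decompositions yield different level-$1$ roots in which $V$ sits, so one must check each supplies at least $N_n$ letters of context---is the right thing to flag; compatibility (which fixes $|\vartheta(a_i)|$ for each letter) is what lets you choose $N_{n+1}$ large enough to guarantee this uniformly.
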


Unfortunately, local recognisability of a random substitution is a rather strong condition; many examples,
such as the random Fibonacci substitution, are not locally
recognisable. However, for many proofs it is enough that
a substitution admits recognisable words at all levels. 
We will return to this concept when we address the proof of Theorem \ref{pisot-recogwords-nonmixing}.

We are now in a position to prove the first of our 
main results, Theorem \ref{mainthm-non-pisot-mixingtogcd}.
For a random substitution $\vartheta$, we write $|\vartheta|:=\max \left\{|u| \mid u \in \vartheta(\mathcal{A}) \right\}$.

\begin{proof}[Proof of Theorem \ref{mainthm-non-pisot-mixingtogcd}]
We prove the contrapositive. Suppose that $\gcd\big\{|\vartheta^n(a)| : a\in \mathcal{A}\big\}=p > 1$ for some natural number $n \geq 1$.
As $\vartheta$ is locally recognisable, $\vartheta^n$ is also locally recognisable by Proposition \ref{localglobal}. So, without loss of generality, we can assume that $n = 1$. 
Let $N$ be the radius of recognisability for $\vartheta$,
let $u$ be a $\vartheta$-legal word of length
 $|u| > 2N+2|\vartheta|$, 
 and let $u'$ be the subword of $u$
 obtained by deleting the first $N$ and last $N$ letters of $u$.
 By local recognisability, every inflation word decomposition of $u$ induces the same inflation word 
 decomposition of $u'$. Since $|u'|>2|\vartheta|$, this
 unique decomposition contains a complete $1$-super-word $u''$ in a fixed position relative to $u'$ (and hence relative to $u$).
 
 Now suppose that $uwu$ is a $\vartheta$-legal word and 
 consider an inflation word decomposition of $uwu$. 
 This induces inflation word decompositions of the first
 and second $u$, giving rise to appearances of $u''$
 spaced exactly $|u|+|w|$ apart. However, the length of 
 every $1$-super-word is a multiple of $p$, so $|u|+|w|$
 must be a multiple of $p$. Since $p>1$, there are arbitrarily 
 large values of $|w|$ that are impossible, so $X_\vartheta$
 is not topologically mixing. 
 \end{proof}

\subsection{Substitutions on two letters}
In order to apply Theorem \ref{thm:KSS}, it will be helpful to classify all two-letter primitive substitutions whose subshifts are periodic. A fully detailed proof appears in the thesis of the fourth author \cite{Tadeo2019} and so we only present a sketch of the proof here.
The constant length version of this result appears in work of Baake, Coons and Ma\~{n}ibo \cite{Baake2}.
Our proof for the general case is similar in spirit to theirs.
In particular, we capitalise on the absence of asymptotic pairs for periodic subshifts.

\begin{theorem}\label{pro:periodicforms}
Let $\theta$ be a primitive deterministic substitution on $\mathcal{A}=\{a,b\}$. $X_\theta$ is periodic if and only if $\theta$ takes one of the following forms $\{a\mapsto u^k, b\mapsto u^\ell\}$, $\{a\mapsto (ab)^ka, b\mapsto (ba)^\ell b\}$, or $\{a\mapsto (ba)^kb, b\mapsto (ab)^\ell a\}$
where $u$ is a non-empty finite word and $k, \ell \ge 0$.
\end{theorem}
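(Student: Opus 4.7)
The ``if'' direction is a short calculation. For Form~1, since $\theta$ sends each letter to a power of $u$, for any word $v$ the image $\theta(v)$ is again a power of $u$; iterating, every super-word is a power of $u$, so $X_\theta$ is contained in the shift-orbit of $u^{\mathbb{Z}}$, and minimality (from primitivity) gives equality. For Forms~2 and~3, direct computation yields the identities $\theta(ab)=(ab)^{k+\ell+1}$ and $\theta(ba)=(ba)^{k+\ell+1}$ (with the roles of $ab$ and $ba$ swapped in Form~3), so every high-level super-word is a subword of $(ab)^{\mathbb{Z}}$, forcing $X_\theta\subseteq\{(ab)^{\mathbb{Z}},(ba)^{\mathbb{Z}}\}$.

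For the converse, suppose $X_\theta$ is periodic. Primitivity of $\theta$ implies that $X_\theta$ is minimal, so $X_\theta$ is the shift-orbit of $w^{\mathbb{Z}}$ for some primitive word $w$ of length $n$. The case $n=1$ is immediately excluded: a subshift consisting of a single constant sequence would force both $\theta(a)$ and $\theta(b)$ to be powers of one letter, so $M_\theta$ would have a zero row, contradicting primitivity. The crux of the argument is to rule out $n\geq 3$, and here the absence of asymptotic pairs for finite periodic orbits is decisive. Two distinct periodic sequences with the same minimal period cannot agree on an entire half-line, so $X_\theta$ has no asymptotic pairs. On the other hand, given a power $\theta^k$ for which $\theta^k(a)$ starts with $a$ (which always exists by a pigeonhole argument on the first-letter map), the limit $u:=\lim_m \theta^{km}(a)$ is a well-defined right-infinite fixed tail, and analogously one obtains left-infinite fixed tails $v_b$ for any letter $b$ satisfying that $\theta^k(b)$ ends with $b$. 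If two distinct letters $b\neq b'$ both satisfy this condition and both seams $b\cdot a$ and $b'\cdot a$ are legal in $X_\theta$, then the two bi-infinite sequences $v_b.u$ and $v_{b'}.u$ are distinct elements of $X_\theta$ that agree on the right half-line, contradicting the absence of asymptotic pairs. A combinatorial analysis shows that when $|w|\geq 3$, the legal-word structure of $X_\theta$ is rich enough to allow such distinct seam letters to coexist with the required fixed-letter conditions, producing the needed contradiction.

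It remains to classify the case $n=2$. Then $w\in\{ab,ba\}$ and the legal $2$-words of $X_\theta$ are exactly $ab$ and $ba$. Let $\alpha_1,\alpha_2$ (respectively $\beta_1,\beta_2$) denote the first and last letters of $\theta(a)$ (respectively $\theta(b)$). Legality of $\theta(a)\theta(b)$ and $\theta(b)\theta(a)$ in $X_\theta$ forces the junction pairs to be legal, yielding $\alpha_2\neq\beta_1$ and $\beta_2\neq\alpha_1$. Combined with the fact that each of $\theta(a)$ and $\theta(b)$ is itself a subword of $(ab)^{\mathbb{Z}}$---so it alternates letters and is determined by its first letter, last letter, and length---a four-way case analysis on $(\alpha_1,\alpha_2)$ yields exactly three admissible configurations, matching Form~1 (with $u=ab$ or $u=ba$), Form~2, and Form~3 respectively. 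The main obstacle throughout the argument is the rigorous exclusion of $n\geq 3$ via the asymptotic-pair construction; the $n=2$ enumeration and the ``if'' direction are comparatively routine.
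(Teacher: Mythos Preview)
Your proof has a structural error in the converse direction. You organise the argument by the minimal period $n$ of $X_\theta$ and claim that $n \geq 3$ can be ruled out entirely, leaving only $n = 2$ to classify. But Form~1 substitutions can have arbitrarily large period: for instance $\theta \colon a \mapsto aab,\ b \mapsto aab$ is primitive, of Form~1 with $u = aab$, and has $X_\theta$ equal to the period-$3$ orbit of $(aab)^{\mathbb Z}$. So the asserted contradiction for $n \geq 3$ cannot hold in general, and your final classification, which only recovers Form~1 with $u \in \{ab, ba\}$ from the $n=2$ case, misses every Form~1 substitution whose primitive word $u$ has length at least~$3$.

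The mechanism you propose also breaks down precisely in these examples. For any Form~1 substitution both $\theta(a)$ and $\theta(b)$ end in the last letter of $u$, so the last-letter map on $\{a,b\}$ is constant and no power $\theta^k$ can have $\theta^k(a)$ ending in $a$ and $\theta^k(b)$ ending in $b$ simultaneously; the ``distinct seam letters with the required fixed-letter conditions'' never materialise. What one could hope to prove is that, for $n \geq 3$, failure of the asymptotic-pair construction forces $\theta$ into Form~1 --- a different and harder statement than the one you assert. Your sentence ``a combinatorial analysis shows\ldots'' hides exactly this gap.

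The paper's proof avoids the period-length dichotomy altogether. It first notes that if neither $aa$ nor $bb$ is legal then $\theta$ is already of Form~2 or~3. Otherwise, say $aa$ is legal; one then repeatedly \emph{conjugates} $\theta$ (cyclically moving a common first letter of $\theta(a),\theta(b)$ to the end) until the first letters differ. If this process never terminates then $\theta(a)$ and $\theta(b)$ are powers of a common word, i.e.\ Form~1. If it terminates, then after squaring one has $\theta(a)$ beginning with $a$ and $\theta(b)$ beginning with $b$, and a short case split on the last letters exhibits an asymptotic pair in every remaining case, contradicting periodicity. The conjugation step is exactly what isolates Form~1 from the rest, and it is the ingredient your argument is missing.
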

\begin{proof}
It is an easy exercise to show that the three above forms lead to periodic subshifts. The non-trivial step is in showing that no other form of substitution on two-letters gives rise to a periodic subshift.

Let $\theta$ be a primitive periodic deterministic substitution. Suppose that $\theta$ is not of the form $a \mapsto (ab)^ka, b \mapsto (ba)^\ell b$ or $a \mapsto (ba)^kb, b \mapsto (ab)^la$. It follows then that either $aa$ or $bb$ is a $\theta$-legal word. Without loss of generality, assume that $aa$ is legal. By periodicity  and primitivity of $\theta$, we must then have $aa,ab,ba \in \mathcal{L}_\theta$.

We first put the substitution into a standard form such that the leftmost letters of $\theta(a)$ and $\theta(b)$ are different. To do this, we take `conjugates' of the substitution. That is, if $\theta(a) = xv$ and $\theta(b) = xw$ for $x \in \{a,b\}$, then we replace the substitution with the conjugate substitution $\theta'\colon a \mapsto vx, b \mapsto wx$. It is easy to see that conjugating a primitive substitution leaves the associated subshifts fixed, so $X_\theta = X_{\theta'}$. We may repeat the conjugation process. Either the iteration never stops, in which case $\theta$ is of the form $a \mapsto u^k, b \mapsto u^\ell$, or we eventually reach a conjugate $\theta^{(n)} \colon a \mapsto x v_a, b \mapsto y v_b$ with $x \neq y$. By squaring if necessary, we can assume that $x = a$ and $y = b$. This new substitution $\theta \colon a \mapsto a u_a s, b \mapsto b u_b t$ for $s, t \in \{a,b\}$ is then the standard form we may assume our substitution to be in.

Recall that $x, y \in X$ are called an \emph{asymptotic pair} if $x \neq y$ and either $d(\sigma^n(x),\sigma^n(y)) \to 0$ or $d(\sigma^{-n}(x),\sigma^{-n}(y)) \to 0$ as $n \to \infty$.
Periodic subshifts necessarily admit no asymptotic pairs.
We now consider the possible cases for the letters $s,t$. In each case, the existence of an asymptotic pair $x,y \in X_\theta$ will allow us to conclude that $X_\theta$ is aperiodic and hence reach a contradiction. For the case $s=a,t=b$, the existence of $a.a$ as a legal seed for producing an asymptotic pair of fixed points for the substitution is essential. This is why we had to conclude that $aa,ab,ba \in \mathcal{L}_\theta$ earlier. 

We will show this case. The other cases are similar (possibly with different seeds).

\textit{Case: $s = a$, $t = b$.} The substitution admits two (distinct) fixed points (because $a.a$ and $a.b$ are both legal seeds)
\[
\begin{array}{l}
 \cdots \theta^2(au_a) \: \theta(au_a) \: au_a \: a \: . \: a \: u_aa \: \theta(u_aa) \: \theta^2(u_aa) \cdots\\
 \cdots \theta^2(au_a) \: \theta(au_a) \: au_a \: a \: . \: b \:\, u_bb \:\, \theta(u_bb) \:\, \theta^2(u_bb)\, \cdots
\end{array}
\]
which are elements in $X_\theta$ that are right-asymptotic to one another.
\end{proof}

\begin{corollary}
Let $\theta$ be a primitive periodic deterministic substitution on $\mathcal{A} = \{a,b\}$.
The second eigenvalue $\lambda_{2}$ of the substitution matrix $M_\theta$ is either $0, 1,$ or $-1$.
\end{corollary}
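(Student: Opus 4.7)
The plan is to invoke Theorem \ref{pro:periodicforms} to restrict $\theta$ to one of three explicit normal forms, and then to compute the eigenvalues of the substitution matrix $M_\theta$ directly in each case. All the nontrivial combinatorial work has already been done by the classification, so only a short eigenvalue calculation remains.

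For the first form $\{a \mapsto u^k,\; b \mapsto u^\ell\}$, I would write $p = |u|_a$ and $q = |u|_b$. Both columns of $M_\theta$ are then integer multiples of $(p,q)^T$, so $M_\theta$ has rank one, $\det M_\theta = 0$, and hence $\lambda_2 = 0$. For the second form $\{a \mapsto (ab)^k a,\; b \mapsto (ba)^\ell b\}$, the substitution matrix is
$$M_\theta = \begin{pmatrix} k+1 & \ell \\ k & \ell+1 \end{pmatrix},$$
with trace $k+\ell+2$ and determinant $(k+1)(\ell+1) - k\ell = k+\ell+1$; its characteristic polynomial therefore factors as $(\lambda - 1)\bigl(\lambda - (k+\ell+1)\bigr)$, yielding $\lambda_2 = 1$. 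For the third form $\{a \mapsto (ba)^k b,\; b \mapsto (ab)^\ell a\}$, the matrix is
$$M_\theta = \begin{pmatrix} k & \ell+1 \\ k+1 & \ell \end{pmatrix},$$
with trace $k+\ell$ and determinant $k\ell - (k+1)(\ell+1) = -(k+\ell+1)$, whose characteristic polynomial factors as $(\lambda + 1)\bigl(\lambda - (k+\ell+1)\bigr)$, so $\lambda_2 = -1$. Combining the three cases yields $\lambda_2 \in \{0, 1, -1\}$, as claimed.

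There is no real obstacle in this argument; once Theorem \ref{pro:periodicforms} is in hand, the rest is pure arithmetic. The only point to watch is the convention fixed in Section \ref{SEC:definitions} that the columns of $M_\theta$ (and not the rows) record the abelianisations $\Phi(\theta(a))$ and $\Phi(\theta(b))$. Once this is set up correctly, each case is immediate from the trace--determinant data, and the factorisations of the characteristic polynomial in Cases 2 and 3 pick off $+1$ and $-1$ respectively as the second eigenvalue.
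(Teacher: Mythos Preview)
Your proof is correct and follows essentially the same approach as the paper: invoke Theorem~\ref{pro:periodicforms} and then compute the second eigenvalue in each of the three cases. The only cosmetic difference is that the paper handles the last two forms simultaneously by observing that $(1,-1)$ is a left eigenvector of $M_\theta$ with eigenvalue $+1$ and $-1$ respectively, whereas you compute the trace and determinant and factor the characteristic polynomial; both routes amount to the same arithmetic.
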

\begin{proof}
By Theorem \ref{pro:periodicforms}, we only need to consider the three possible forms that $\theta$ can take.
If $\theta$ is of the form $a \mapsto u^k, b \mapsto u^\ell$, then $M_\theta$ has rank one, with $\lambda_2=0$.
For the other forms, the substitution matrix has 
$(1,-1)$ as a left-eigenvector with eigenvalue $\pm 1$.
\end{proof}

\begin{lemma}\label{aperiodsubs}
Let $\vartheta$ be a primitive compatible random substitution defined on $\mathcal{A}=\{a,b\}$ with $|\lambda_2| > 1$. 
If $\gcd\{|\vartheta^n(a)|, |\vartheta^n(b)|\}=1$ for all $n\ge 1$ then the basic subspaces of $\vartheta$ are topologically mixing.
\end{lemma}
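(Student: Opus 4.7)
The plan is to reduce the lemma to Theorem~\ref{thm:KSS-iff} applied to an arbitrary marginal $\theta$ of $\vartheta^k$, so every basic subspace $X_\theta \subset X_\vartheta$ is covered. Four hypotheses must be checked for each such $\theta$: primitivity, two letters, $|\lambda_2(M_\theta)| > 1$, and $\gcd\{|\theta^n(a)|,|\theta^n(b)|\}=1$ for all $n$; then aperiodicity of $\theta$ so that Theorem~\ref{thm:KSS-iff} is applicable.

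First I would observe that, by compatibility, every marginal $\theta$ of $\vartheta^k$ shares the same substitution matrix $M_\theta = M_\vartheta^k$. In particular, $M_\theta$ is primitive (since $M_\vartheta$ is), it is $2\times 2$, and its second eigenvalue is $\lambda_2^k$, whose modulus is $|\lambda_2|^k > 1$. Again by compatibility, the lengths $|\theta^n(a)|$ and $|\theta^n(b)|$ coincide with the common lengths $|\vartheta^{nk}(a)|$ and $|\vartheta^{nk}(b)|$ (every element of $\vartheta^{nk}(a)$ has the same length), so the hypothesis of the lemma gives $\gcd\{|\theta^n(a)|,|\theta^n(b)|\}=1$ for all $n\geq 1$.

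The main step is to verify aperiodicity of $\theta$, and this is where Proposition~\ref{pro:periodicforms} (and its corollary) does the work. If $\theta$ were periodic, then by the corollary to Proposition~\ref{pro:periodicforms} the second eigenvalue of $M_\theta$ would lie in $\{0,1,-1\}$. But we already computed that the second eigenvalue of $M_\theta$ is $\lambda_2^k$, which satisfies $|\lambda_2^k| = |\lambda_2|^k > 1$, a contradiction. Hence $\theta$ is aperiodic.

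With all hypotheses of Theorem~\ref{thm:KSS-iff} verified, we conclude that $X_\theta$ is topologically mixing. Since $\theta$ was an arbitrary marginal of an arbitrary power $\vartheta^k$, and the basic subspaces of $\vartheta$ are by definition exactly the $X_\theta$ for such marginals, every basic subspace is topologically mixing. The only substantive obstacle is ruling out periodicity of the marginals; once the classification in Proposition~\ref{pro:periodicforms} is invoked, the spectral information forced by the hypothesis $|\lambda_2|>1$ closes the argument immediately.
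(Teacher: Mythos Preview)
Your proof is correct and follows essentially the same approach as the paper: identify the substitution matrix of a marginal $\theta$ of $\vartheta^k$ as $M_\vartheta^k$, use the corollary to Proposition~\ref{pro:periodicforms} to rule out periodicity via the second eigenvalue $\lambda_2^k$, transfer the gcd hypothesis by compatibility, and invoke Theorem~\ref{thm:KSS-iff}. Your write-up is in fact slightly more explicit than the paper's (you spell out all four hypotheses needed for Theorem~\ref{thm:KSS-iff}, and you correctly cite Theorem~\ref{thm:KSS-iff} rather than Theorem~\ref{thm:KSS} for the sufficient direction).
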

\begin{proof}
Let $X_{\theta}$ be a basic subspace of $\vartheta$. 
So, $\theta(a) \in \vartheta^k(a)$ and $\theta(b) \in \vartheta^k(b)$ for some natural number $k\ge 1$.
The second eigenvalue of $\theta$ then 
has modulus $|\lambda_2^k|>1$. In particular,
this eigenvalue is not 0 or $\pm1$, so $X_\theta$
is non-periodic. By the compatibility of $\vartheta$, we have
\[
\gcd\big\{|\theta^n(a)|,|\theta^n(b)|\big\}=\gcd\big\{|\vartheta^{nk}(a)|,|\vartheta^{nk}(b)|\big\}=1
\]
for all $n\ge 1.$ 
By Theorem \ref{thm:KSS}, $X_{\theta}$ is then topologically mixing. 
\end{proof}

We now prove Theorem \ref{mainthm-non-pisot-iff}, the generalisation of Theorem \ref{thm:KSS-iff} to the random setting.

\begin{proof}[Proof of Theorem \ref{mainthm-non-pisot-iff}]
One direction is immediate from Theorem \ref{mainthm-non-pisot-mixingtogcd}. So, it remains to show that if $\gcd\{|\vartheta^n(a)|, |\vartheta^n(b)|\}=1$ for all $n \geq 1$, then $X_\vartheta$ is topologically mixing.

Let $u$ and  $v$ be arbitrary legal words. 
There then exist 
natural numbers $k_u$ and $k_v$ such that $u$ is a subword of a legal $k_u$-super-word and $v$ is a subword of a 
legal $k_v$-super-word. Pick $k$ bigger than the larger of 
$k_u$ and $k_v$. If $a_ia_j$ is a legal 2-letter word then,
by primitivity, there is an element of $\vartheta^k(a_ia_j)$ that contains $u$ in the first $k$-super-word and $v$ in the second. Thus there is a $\vartheta$-legal word of the form
$uwv$. 

Let $\theta\coloneqq \theta_{uwv}$ be a marginal of a power of $\vartheta$ such that $uwv$ is $\theta$-legal.
By Lemma \ref{aperiodsubs}, $X_\theta$ is topologically mixing.
Thus there exists a natural number $N > 0$ such that for all $n \geq N$, there is a word $w_n$ of length $n$ such that $uw_nv$ is $\theta$-legal, and hence $\vartheta$-legal.
As $u$ and $v$ were chosen arbitrarily, it follows that $X_\vartheta$ is topologically mixing.
\end{proof}

\section{Pisot random substitutions}\label{SEC:pisot}
\subsection{\texorpdfstring{$C$}{}-Balancedness}
The concept of `balancedness' was first introduced by Morse and Hedlund \cite{MorseHedlund} to provide one of the many classifications of Sturmian sequences \cite{Fogg}. Here, we consider the more general notion of \emph{$C$-balanced} sequences, also called sequences of \emph{bounded discrepancy}. For primitive substitutions, $C$-balancedness was classified by Adamczewski \cite{Adamczewski} and for the most part, the story changes little in the study of $C$-balancedness for compatible random substitutions. Note, however, that we do not attempt a full classification of $C$-balancedness in this setting. We only consider the Pisot case, which is sufficient for our needs. An approachable account of $C$-balancedness is presented in the work of Berth\'{e} and Bernales \cite{Berthe2}, from which we adopt the following definitions and results.
\begin{definition}
A bi-infinite sequence $x\in \mathcal{A}^\mathbb{Z}$ is said to be \textit{$C$-balanced} if there exists a constant $C$ such that for every $a \in \mathcal{A}$ and all pairs $w, w'$ of subwords of $x$ with $|w|=|w'|$,
\[
\left||w|_{a}-|w'|_{a}\right|\le C.
\]
A subshift $X\subseteq \mathcal{A}^\mathbb{Z}$ is said to be \textit{$C$-balanced} if there exists a constant $C$ such that for every $a \in \mathcal{A}$ and all pairs $w, w' \in \mathcal{L}(X)$ with $|w| = |w'|$,
\[
\left||w|_{a} - |w'|_{a}\right|\le C.
\] 
\end{definition}

\begin{remark}
Note that we only consider $C$-balancedness with respect to letters, and not the more general notion of $C$-balancedness with respect to subwords, which is a uniform bound on the deviation of the number of appearances of any legal subword within pairs of words of the same length.
Indeed, for our setting, this would be asking for too much from random substitutions, where such a property necessarily requires the corresponding subshift to be uniquely ergodic.
This is never the case for non-degenerate RS-subshifts.
\end{remark}

Recall that the \textit{frequency} of a letter $a\in \mathcal{A}$ in a bi-infinite sequence $x \in \mathcal{A}^\mathbb{Z}$ is the limit
$$f_a\coloneqq\lim_{n\to \infty}\frac{\left|x_{[-n,n]}\right|_a}{2n+1}, $$
if it exists, and $x$ is said to have \textit{uniform letter-frequencies} if, for every letter $a\in \mathcal{A}$, the convergence of $\frac{|x_{k}\cdots x_{k+2n}|_a}{2n+1}$ towards $f_a$ is uniform in $k$, when $n$ tends to infinity.
The amount $\big||w|_{a}-f_{a}|w| \big|$ that a subword $w$ of $x$ deviates from the expected number of occurrences of the letter $a$, according to the letter-frequency of $a$ in $x$ is called the \emph{discrepancy} of the letter $a$ in the word $w$.

\begin{theorem}[\!\!{\cite[Prop.\ 2.4]{Berthe3}}]\label{thmbalanced}
A bi-infinite sequence $x \in \mathcal{A}^\mathbb{Z}$ is $C$-balanced for some $C$ 
if and only if it has uniform letter frequencies and there exists another constant $B$ such that for any finite subword $w$ of $x$ and any $a \in \mathcal{A}$,  
\[
\big||w|_{a}-f_{a}|w| \big|\le B,
\] where $f_a$ is the frequency of $a$.
\end{theorem}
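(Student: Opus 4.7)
The plan is to prove the two directions separately. The $(\Leftarrow)$ implication is an immediate triangle inequality: if the discrepancy bound $\big||w|_a - f_a|w|\big| \leq B$ holds for every finite subword of $x$ and every letter $a$, then for any pair $w, w'$ of subwords of common length $n$,
\[
\big||w|_a - |w'|_a\big| \;\leq\; \big||w|_a - f_a n\big| + \big||w'|_a - f_a n\big| \;\leq\; 2B,
\]
so $x$ is $(2B)$-balanced. The substantive direction is $(\Rightarrow)$, where both the uniform letter frequencies and the discrepancy bound must be extracted from $C$-balancedness alone.

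Assume $x$ is $C$-balanced and fix $a \in \mathcal{A}$. For each $n \geq 1$ and $k \in \mathbb{Z}$, set $c^{(n)}(k) \coloneqq |x_{[k, k+n-1]}|_a$. The $C$-balancedness hypothesis says precisely that $\{c^{(n)}(k) \mid k \in \mathbb{Z}\}$ is contained in an interval of length at most $C$; let $m^{(n)} \coloneqq \inf_k c^{(n)}(k)$, so $c^{(n)}(k) \in [m^{(n)}, m^{(n)} + C]$ for every $k$. The key observation is that a window of length $mn$ decomposes into $m$ consecutive windows of length $n$, giving
\[
c^{(mn)}(k) \;=\; \sum_{j=0}^{m-1} c^{(n)}(k + jn) \;\in\; \bigl[m \cdot m^{(n)},\; m(m^{(n)} + C)\bigr].
\]
Dividing by $mn$, both $c^{(n)}(k)/n$ and $c^{(mn)}(k)/(mn)$ lie in the common interval $[m^{(n)}/n,\; (m^{(n)} + C)/n]$, and hence differ by at most $C/n$, uniformly in $k$ and $m$.

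To compare frequencies at arbitrary lengths $n_1$ and $n_2$, I would pass through the common length $n_1 n_2$ via the estimate just proved, yielding $\big|c^{(n_1)}(k)/n_1 - c^{(n_2)}(k)/n_2\big| \leq C/n_1 + C/n_2$, which is Cauchy uniformly in $k$. Denote the resulting common limit by $f_a$. Since $f_a \in [m^{(n)}/n,\; (m^{(n)}+C)/n]$ for every $n$, letting $m \to \infty$ in the displayed inclusion yields
\[
\big|c^{(n)}(k) - f_a n\big| \;\leq\; C
\]
for all $k$ and $n$, so the discrepancy bound holds with $B = C$. Every finite subword of $x$ equals $x_{[k,k+n-1]}$ for some $k, n$, so this is exactly the required estimate, and the uniformity of the frequencies follows from the fact that none of the estimates involve the starting position $k$.

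The main technical hurdle is the Cauchy step at lengths that are not multiples of one another, but this is handled by the standard device of passing through a common multiple. The uniformity in $k$ is automatic since the bound $C/n$ contains no dependence on the starting position, and the remaining content is bookkeeping; the constants come out cleanly as $B = C$ in one direction and $C = 2B$ in the other.
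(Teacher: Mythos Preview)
The paper does not prove this statement; it is quoted as \cite[Prop.\ 2.4]{Berthe3} and used as a black box en route to Theorem~\ref{thm:pisot-balanced}. There is therefore no in-paper argument to compare yours against.

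Your proof is correct. The $(\Leftarrow)$ direction is the obvious triangle inequality, and for $(\Rightarrow)$ your decomposition of a window of length $mn$ into $m$ windows of length $n$ is the standard route: it traps $c^{(mn)}(k)/(mn)$ in the interval $[m^{(n)}/n,\,(m^{(n)}+C)/n]$, and the Cauchy estimate via the common multiple $n_1 n_2$ then gives existence of the limit $f_a$. One small point you leave implicit is why the limit is independent of $k$; this is immediate from $|c^{(n)}(k)/n - c^{(n)}(k')/n|\le C/n$, which is just $C$-balancedness again, so there is no genuine gap. The sentence ``Since $f_a \in [m^{(n)}/n,\,(m^{(n)}+C)/n]$ for every $n$, letting $m\to\infty$ in the displayed inclusion yields \ldots'' reads a bit out of order (the membership of $f_a$ is itself what you get by sending $m\to\infty$), but the intended logic is clear and the constants $B=C$ and $C=2B$ are right.
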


\begin{theorem}[\!\!{\cite[Thm.\ 2.5]{Berthe2}}]\label{balancedmain}
Let $\theta$ be a primitive deterministic irreducible Pisot substitution. Then the subshift $X_\theta$ is $C$-balanced for some $C$.
\end{theorem}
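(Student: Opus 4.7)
The plan is to exploit the spectral gap of $M_\theta$ afforded by the Pisot condition, together with the recognisability of $\theta$, to show that the non-PF component of the abelianisation of any legal word is uniformly bounded. Since $\theta$ is primitive, irreducible and Pisot, the matrix $M_\theta$ has PF-eigenvalue $\lambda_1$ Pisot, and all other eigenvalues $\lambda_2,\ldots,\lambda_d$ are simple (by irreducibility of the characteristic polynomial) with moduli at most some $\rho<1$. Writing $f$ for the right PF-eigenvector normalised by $\sum_a f(a)=1$ (the letter-frequency vector) and $E_s$ for the $M_\theta$-invariant complement spanned by the generalised eigenspaces of the subdominant eigenvalues, the discrepancy $\big||w|_a - f(a)\,|w|\big|$ is, up to a fixed constant, controlled by $\|\pi_s(\Phi(w))\|$, where $\pi_s\colon \mathbb{R}^d \to E_s$ is the projection along $\mathbb{R} f$. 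In view of Theorem \ref{thmbalanced}, it suffices to bound $\|\pi_s(\Phi(w))\|$ uniformly for $w\in\mathcal{L}(X_\theta)$.

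Since irreducible Pisot primitive substitutions are aperiodic (the PF-eigenvector has irrational entries while a periodic subshift would force rational letter-frequencies), Moss\'e's theorem gives that $\theta$ is recognisable with some radius $N$. For a sufficiently long legal word $w$, I would iterate recognisable desubstitution: write $w=p_0\,\theta(w^{(1)})\,s_0$ with $w^{(1)}\in\mathcal{L}(X_\theta)$ and boundary pieces $p_0,s_0$ of length at most $2|\theta|$, then repeat on $w^{(1)}$, and so on. After $K$ steps, with $K$ chosen so that $|w^{(K+1)}|$ is bounded by a fixed constant, this yields the telescoping identity
\[
\Phi(w) \;=\; \sum_{k=0}^{K} M_\theta^k\bigl(\Phi(p_k)+\Phi(s_k)\bigr) \;+\; M_\theta^{K+1}\Phi(w^{(K+1)}).
\]

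Projecting onto $E_s$ and using that $\|M_\theta^k|_{E_s}\|\le C_1\rho^k$ (no polynomial factors are needed since all subdominant eigenvalues are simple), together with the uniform bounds $\|\Phi(p_k)\|,\|\Phi(s_k)\|,\|\Phi(w^{(K+1)})\|\le C_2$, gives
\[
\|\pi_s(\Phi(w))\| \;\le\; C_1 C_2 \sum_{k=0}^{\infty} \rho^k \;=\; \frac{C_1 C_2}{1-\rho},
\]
uniformly in $w$, which is the sought-after bound.

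The main obstacle is the bookkeeping. One needs to check carefully that the boundary contributions at every level are uniformly bounded in length, which follows from bounded letter-image lengths together with the fixed recognisability radius, and that one can pass cleanly between two-sided recognisability (for bi-infinite words) and the finite-word setting. The latter is handled by embedding $w$ inside a bi-infinite element of $X_\theta$, desubstituting that element, and reading off an induced decomposition of $w$ with boundary pieces of controlled length. Once these technicalities are in place, the geometric-series estimate is the essence of the argument, and the Pisot hypothesis is exactly what makes the series summable.
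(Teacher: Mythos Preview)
Your argument is correct and follows essentially the same strategy as the one the paper uses (for the random generalisation in Theorem~\ref{thm:pisot-balanced}, which is explicitly modelled on the deterministic proof in \cite{Berthe3}): decompose a legal word via a Dumont--Thomas prefix--suffix hierarchy into boundedly many super-words at each level, note that the discrepancy (equivalently, the $E_s$-projection of the abelianisation) of a level-$k$ super-word is $O(\rho^k)$ by the spectral gap, and sum the resulting geometric series.

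The one noteworthy difference is that you invoke Moss\'e recognisability to \emph{produce} the hierarchical decomposition, whereas the paper's argument simply uses that every legal word admits \emph{some} inflation word decomposition (not necessarily unique), since it is a subword of a super-word. Uniqueness is never used in the estimate, so recognisability is unnecessary here. This is not a gap---your proof is fine for the deterministic statement---but it does explain why the paper's version transfers immediately to random substitutions (where recognisability can fail), while your version would need adjustment in that setting.
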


We now extend this result to the random setting.
The proof is a simple adaptation of the proof in the deterministic setting \cite{Berthe3}, generalising the usual Dumont--Thomas prefix-suffix decomposition \cite{DumontThomas} to the setting of random substitutions.
\begin{theorem}\label{thm:pisot-balanced}
Let $\vartheta$ be a compatible primitive irreducible Pisot random substitution. Then the RS-subshift $X_\vartheta$ is $C$-balanced for some $C$.
\end{theorem}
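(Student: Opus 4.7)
The plan is to adapt the classical proof of $C$-balancedness for deterministic irreducible Pisot substitutions, generalising the Dumont--Thomas prefix-suffix decomposition to the random setting. Since $\vartheta$ is compatible and primitive, Perron--Frobenius theory applied to $M_\vartheta$ gives a simple leading eigenvalue $\lambda_1$ with strictly positive right eigenvector $r$, which I normalise so that its entries sum to $1$; these entries are then the letter frequencies $f_a$. The irreducibility of the characteristic polynomial together with the Pisot hypothesis forces every other eigenvalue $\lambda_i$ of $M_\vartheta$ to lie in the open unit disk. I then decompose $\mathbb{R}^d = \mathbb{R} r \oplus E^s$, where $E^s$ is the sum of generalised eigenspaces for the non-Perron eigenvalues, and let $\pi_s$ denote the projection onto $E^s$ along $r$. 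Fixing $\mu \in (\max_{i \geq 2} |\lambda_i|, 1)$, we have $\|M_\vartheta^k|_{E^s}\| \leq K \mu^k$ for some constant $K$. Because the covector $e_a - f_a(1,\ldots,1)$ annihilates $r$, one verifies that $|w|_a - f_a |w| = (e_a - f_a(1,\ldots,1))^T \pi_s(\Phi(w))$; thus by Theorem \ref{thmbalanced} it suffices to show $\|\pi_s(\Phi(w))\|$ is bounded independently of $w \in \mathcal{L}_\vartheta$.

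The heart of the argument is a prefix-suffix decomposition. Given $w \in \mathcal{L}_\vartheta$, I choose $n$ large enough and a letter $a \in \mathcal{A}$ so that $w$ is a subword of some realisation $\tilde{a} \in \vartheta^n(a)$, and fix a coherent family of inflation word decompositions of $\tilde{a}$ at every level $0 \leq k \leq n$. Peeling layer by layer at the left and right endpoints of $w$ within $\tilde{a}$ produces a decomposition of the form
\[
w = P_0 \cdot \vartheta(P_1) \cdots \vartheta^{n-1}(P_{n-1}) \cdot \vartheta^n(c) \cdot \vartheta^{n-1}(S_{n-1}) \cdots \vartheta(S_1) \cdot S_0,
\]
where each $P_k, S_k, c \in \mathcal{A}^\ast$ has length strictly less than $|\vartheta|$. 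Although neither this decomposition nor the realisations chosen for each $\vartheta^k(P_k)$ are unique when recognisability fails, compatibility of $\vartheta$ forces $\Phi(\vartheta^k(P_k)) = M_\vartheta^k \Phi(P_k)$ independently of these choices. Applying $\pi_s$ to the abelianisation then yields
\[
\pi_s(\Phi(w)) = \sum_{k=0}^{n-1} M_\vartheta^k|_{E^s}\bigl(\pi_s(\Phi(P_k)) + \pi_s(\Phi(S_k))\bigr) + M_\vartheta^n|_{E^s} \pi_s(\Phi(c)),
\]
and each summand has norm at most $K \mu^k$ times a constant depending only on $|\vartheta|$ and $\|\pi_s\|$. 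The resulting geometric series sums to a bound on $\|\pi_s(\Phi(w))\|$ independent of $w$, completing the proof.

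I expect the main technical obstacle to be the careful formulation of the prefix-suffix decomposition in the absence of local recognisability: the deterministic argument implicitly relies on the unique desubstitution supplied by Moss\'{e}'s theorem, which can fail in the random setting. The way around this is to observe that while the decomposition itself is no longer canonical, compatibility of $\vartheta$ makes the abelianisation contributed by each layer insensitive to the particular choices of realisations, so every admissible decomposition yields the same bound. Once this bookkeeping is in place, the contraction on $E^s$ and the final geometric estimate carry over essentially verbatim from the deterministic Pisot proof of Berth\'{e}--Bernales.
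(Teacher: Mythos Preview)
Your proposal is correct and follows essentially the same approach as the paper: both arguments hinge on a Dumont--Thomas style prefix--suffix decomposition of an arbitrary legal word into bounded-length pieces at each substitution level, together with the contraction of $M_\vartheta$ on the non-Perron eigenspace coming from the Pisot hypothesis, summed as a geometric series. The paper phrases the linear algebra in terms of discrepancies of super-words (using diagonalisability of $M_\vartheta$ from irreducibility) rather than via an explicit projection $\pi_s$, and organises the decomposition by first locating the maximal-level super-word contained in $w$ rather than by embedding $w$ in a single large super-word, but these are cosmetic differences; your explicit remark that compatibility renders the abelianisation insensitive to the choice of decomposition is exactly the point that makes the deterministic argument transfer, and the paper relies on it implicitly.
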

\begin{proof}
As $\vartheta$ is irreducible Pisot, we know that the second-largest eigenvalue $\lambda_2$ has modulus strictly bounded above by $1$ and that the substitution matrix 
is diagonalisable.
Let $|\vartheta|=k$ denote the longest inflation word for $\vartheta$.
By diagonalising $M_\vartheta$, a routine calculation shows that for any letters $a,b \in \mathcal{A}$, the discrepancy of $a$ in a level-$n$ inflation word in $\vartheta^n(b)$ is bounded by $D|\lambda_2|^n$ for some fixed constant $D$.

Now consider an arbitrary $\vartheta$-legal word $w$ and an inflation word decomposition that includes a complete 
$n$-super-word with $n$ as large as possible. 
That is, $w$ fully contains between $1$ and $2(k-1)$ complete $n$-super-words, followed by a (possibly empty) suffix $s_n$ and preceded by a (possibly empty) 
prefix $p_n$ such that $s_n$ and $p_n$ are both strictly contained in $n$-super-words. 
The suffix $s_n$ then consists of at most $k-1$ $(n-1)$-super-words followed by a further suffix 
$s_{n-1}$ that is strictly contained in an $(n-1)$-super-word. Continuing in this way, $s_{n-r}$ comprises at most $(k-1)$ $(n-r-1)$-super-words and 
a further suffix $s_{n-r-1}$. 
Iterating, we get that $s_n$ is the concatenation of 
up to $k-1$ $(n-1)$-super-words, up to $k-1$ 
$(n-2)$-super-words, etc., ending with up to $k-1$ 
individual letters. Likewise, the prefix $p_n$ is the 
concatenation of up to $k-1$ letters, up to $k-1$ $1$-super-words, up to $k-1$ $2$-super-words, continuing through $(n-1)$-super words. 

Since the discrepancy of $w$ is bounded by the sum of the 
discrepancies of its pieces, and since the discrepancy of 
each $i$-super-word is bounded by $D |\lambda_2|^i$, the total discrepancy $\big| |w|_a - f_a|w| \big|$ of $a$ in $w$ is bounded above by 
\[
\sum_{i=0}^{n} 2(k-1)D|\lambda_2|^i < \sum_{i=0}^\infty 
2(k-1)D |\lambda_2|^i 
=\frac{2(k-1)D}{1-|\lambda_2|}.
\]
\end{proof}

\begin{remark} This proof can be modified to show that any compatible random substitution with
$|\lambda_2|<1$ must be $C$-balanced. This class includes examples, like the random Thue-Morse
substitution, that have $0$ as an eigenvalue. The complication is that the 
substitution matrix need not be diagonalisable, in that the eigenvalue 0 
(and only the eigenvalue 0!) can be associated with a non-trivial Jordan block. 
To show $C$-balancedness, the case $\lambda_2=0$ has to be treated separately. 
\end{remark}

We remark that, as a consequence of Theorem \ref{thm:pisot-balanced}, bounded Rauzy fractals exist for irreducible Pisot random substitutions (and are almost surely independent of the choice of element chosen from the subshift).
This potentially offers a new tool in the effort to solve the long-standing Pisot conjecture \cite{ABBLS:pisot}.
We therefore invite further exploration of these random substitution Rauzy fractals.

\subsection{Non-mixing of Pisot random substitution subshifts}
We are now in a position to prove Theorem \ref{pisot-recogwords-nonmixing}.
Since irreducible Pisot substitutions are $C$-balanced by Theorem \ref{thm:pisot-balanced}, it is sufficient to prove the following.
\begin{theorem}\label{thm:No-mixing-balanced}
Let $\vartheta$ be a primitive compatible random substitution such that $X_\vartheta$ is $C$-balanced.
There exists a constant $N$ such that if $\vartheta$ admits a level-$n$ recognisable word for some $n \geq N$, then the RS-subshift $X_\vartheta$ is not topologically mixing.
\end{theorem}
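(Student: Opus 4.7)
The plan is to leverage the existence of a level-$n$ recognisable word, for $n$ sufficiently large, to force the set of admissible gap lengths $|w|$ in legal words $uwu$ to be a sparse subset of $\mathbb{Z}_{\geq 0}$ that omits arbitrarily large integers. First I would fix a level-$n$ recognisable word $u$ and consider an arbitrary legal $uwu \in \mathcal{L}_\vartheta$ with $|w|$ large compared to the recognisability radius of $u$. Since the induced level-$n$ inflation word decomposition $([\hat u_1, u_2, \ldots, u_{\ell-1}, \hat u_\ell], v)$ of $u$ is unique, both copies of $u$ in $uwu$ must force the same boundary super-words: the super-word at the right end of the first $u$ has fixed type $t_R := v_\ell$ and overhangs into $w$ by exactly $|\vartheta^n(t_R)| - |\hat u_\ell|$ letters, and symmetrically the super-word at the left end of the second $u$ has type $t_L := v_1$ and stretches into $w$ by $|\vartheta^n(t_L)| - |\hat u_1|$ letters. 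What remains of $w$ is a concatenation of complete level-$n$ super-words, hence a realisation of $\vartheta^n(y)$ for some $\vartheta$-legal root word $y$, so that
\[
|w| = c + |\vartheta^n(y)|, \quad c := (|\vartheta^n(t_R)| - |\hat u_\ell|) + (|\vartheta^n(t_L)| - |\hat u_1|),
\]
where $c$ depends only on $u$ and $n$, and by compatibility $|\vartheta^n(y)| = \sum_{a \in \mathcal{A}} |y|_a\,|\vartheta^n(a)|$ depends only on the abelianisation of $y$.

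I would then bound the number of achievable values of $|\vartheta^n(y)|$ using $C$-balancedness. For any legal $y$ of length $K$, $C$-balancedness constrains $\bigl||y|_a - f_a K\bigr| \leq C$ for each $a \in \mathcal{A}$, so at most $M := (2C+1)^{|\mathcal{A}|-1}$ abelianisations occur (one coordinate being pinned by $\sum_a |y|_a = K$). Each such abelianisation determines $|\vartheta^n(y)|$, so for each $K$ there are at most $M$ possible values of $|\vartheta^n(y)|$. Writing $\ell_{\min, n} := \min_{a \in \mathcal{A}} |\vartheta^n(a)|$, the bound $|\vartheta^n(y)| \leq N'$ forces $K \leq N'/\ell_{\min, n}$, so there are at most $M(N'/\ell_{\min, n} + 1)$ possible values of $|\vartheta^n(y)|$ inside $[0, N']$.

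Finally, because $\vartheta$ is primitive with PF-eigenvalue $\lambda_1 > 1$, $\ell_{\min, n}$ grows like $\lambda_1^n$ up to a positive constant and in particular tends to infinity. I would choose the constant $N$ of the theorem so that $\ell_{\min, n} > 2M$ for all $n \geq N$. Then for such $n$ and any level-$n$ recognisable word $u$, the set of admissible gap lengths $|w|$ is contained in the translate $c + \{|\vartheta^n(y)| : y \in \mathcal{L}_\vartheta\}$, whose intersection with any interval of length $N'$ has cardinality less than $N'/2 + M$, so its complement contains unboundedly many integers as $N' \to \infty$. This supplies arbitrarily large $|w|$ for which no $uwu$ is $\vartheta$-legal, precluding topological mixing. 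The step I expect to be most delicate is the first structural one: recognisability only guarantees uniqueness of the induced decomposition of $u$ itself, and I would need to verify carefully that this really does force identical boundary super-word types $t_L, t_R$ and identical cut-lengths $|\hat u_1|, |\hat u_\ell|$ at both copies of $u$ simultaneously, so that the overhang constant $c$ is genuinely independent of the gap word $w$. Once this is in place, the counting and density arguments are routine.
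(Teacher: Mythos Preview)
Your three-step strategy---force the gap to be (up to a constant) the length of an exact level-$n$ inflation word, then bound the number of such lengths via $C$-balancedness, then kill the density by taking $n$ large---is exactly the paper's argument, and Steps~2 and~3 are essentially identical to the paper's (your bound $(2C+1)^{d-1}$ versus the paper's $(C+1)^{d-1}$ is immaterial).

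The concern you flag in Step~1 is genuine and is not resolved in your write-up. Recognisability of $u$ with radius $R$ says that for \emph{each} legal extension $u^{(l)}uu^{(r)}$ with $|u^{(l)}|=|u^{(r)}|=R$, all decompositions of that extension induce the same decomposition on $u$; it does \emph{not} say that different $R$-contexts must induce the same decomposition. In your word $uwu$ the two copies of $u$ sit with different $R$-neighbourhoods (the first has some unknown prefix to its left, the second some unknown suffix to its right), so nothing forces the boundary super-word types $t_L,t_R$ or the cut-lengths $|\hat u_1|,|\hat u_\ell|$ to agree between the two copies, and your constant $c$ may depend on $w$.

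The paper's fix is a simple padding trick: denote the recognisable word by $\hat u$, choose \emph{one} legal extension $u:=u_l\hat u u_r$ with $|u_l|=|u_r|=R$, and use this $u$ as the test word. In any legal $uwu$, both copies of $\hat u$ now carry the \emph{same} $R$-context $(u_l,u_r)$, so any decomposition of $uwu$ induces the same decomposition on each copy of $\hat u$. Corresponding cut-points in the two copies are then separated by an exact level-$n$ inflation word, and since these corresponding points are $|uw|$ apart, $|uw|$ itself is such a length. With this one adjustment, your argument goes through and matches the paper's.
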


\begin{proof}
This proof has several steps: 
\begin{enumerate}
\item Showing that the existence of a level-$n$ recognisable word implies the existence of a $\vartheta$-legal word $u$ such that, if $uwu$ is $\vartheta$-legal, then the length
of $uw$ is the length of an exact level-$n$ inflation word. This step does not require $C$-balancedness.
    \item Showing that for each length $L$ and natural number $n$, the set of 
    abelianisations of words of the form $\vartheta^n(v)$,
    where $v$ is a $\vartheta$-legal word of length $L$, 
    is bounded by a constant independent of $L$ and $n$. 
    This is where $C$-balancedness comes in.
    \item Showing that for sufficiently large $n$, the set of possible lengths of level-$n$ inflation words has natural density strictly less than 1. This step uses
    step 2. Combined with step 1, this shows that $X_\vartheta$ is not topologically mixing.
\end{enumerate}

Step 1: Suppose that there exists a level-$n$ recognisable 
word $\hat u$ with recognisability radius $R$. Let $u_l$ 
and $u_r$ be words of length $R$ such that $u=u_l\hat u u_r$ is 
$\vartheta$-legal. By recognisability, all level-$n$ 
inflation word decompositions of $u$ result in the same
decomposition of $\hat u$. 

Now consider a level-$n$ decomposition of a $\vartheta$-legal word $uwu$. Since the decompositions of 
the two copies of $\hat u$ are identical, there are 
corresponding points in the two copies that are spaced by 
an exact level-$n$ inflation word. However, the spacing 
between corresponding points in the two copies of $\hat u$ 
is precisely the length of $uw$. 

Step 2: Suppose that our alphabet $\mathcal{A}$ consists of $d$ letters. 
The number of $a$'s in a word of fixed length $L$
can take on at most $C+1$ values, since the largest such value is at most $C$ plus the smallest. 
Repeating for all but one letter and noting that the sum of the entries of the ablianisation is fixed, 
we see that the set of abelianisations of words of length exactly
$L$ has cardinality at most $(C+1)^{d-1}$. 
Since the abelianisation of each element of $\vartheta^n(v)$ is just 
$M_\vartheta^n$ times the abelianisation of $v$, there are
at most $(C+1)^{d-1}$ abelianisations of exact level-$n$ inflation words whose roots have length exactly $L$. 

Step 3: The growth in the lengths of $n$-super-words is 
controlled by $\lambda_1$, the PF-eigenvalue of $M_\vartheta$. Specifically, there exist positive constants $c_1$
and $c_2$ such that every $n$-super-word has length at 
least $c_1 \lambda_1^n$ and at most $c_2 \lambda_1^n$. 
If $w$ is a $\vartheta$-legal exact level-$n$ 
inflation word with root $v$ and if $|w| < L$, then 
$|v| < L \lambda_1^{-n}/c_1$. Since each possible length of 
$|v|$ gives rise to at most $(C+1)^{d-1}$ possible abelianisations of $w$, and since each abelianisation determines a unique length, the number of possible lengths 
of $w$ is bounded by $(C+1)^{d-1} L \lambda_1^{-n}/c_1$. As $L$ goes to $\infty$, the density of possible lengths of $w$ is bounded by $(C+1)^{d-1}/(c_1 \lambda_1^n)$. If we pick $N$ such
that $\lambda_1^N > (C+1)^{d-1}/c_1$, then for all $n \geq N$ this
density is less than 1. 
\end{proof}

\section{Additional Examples}\label{SEC:examples}
\begin{example}
Consider the compatible random substitution $\vartheta \colon a\mapsto \{abababa, bbbaaaa\}, b\mapsto \{babb,bbab\}$
whose substitution matrix is 
$\begin{psmallmatrix}
4&1\\
3&3\\
\end{psmallmatrix}$
with eigenvalues $\lambda_1=\frac{1}{2}(7+\sqrt{13})$ and $\lambda_2=\frac{1}{2}(7-\sqrt{13})>1$. 
It can be verified that $\gcd\{|\vartheta^n(a_i)| : a_i\in \mathcal{A}\} = 1$ for every $n\ge 1$. 
Thus, by Theorem \ref{mainthm-non-pisot-iff}, $X_\vartheta$ is topologically mixing. 
\end{example}

\begin{example}\label{ex:integer-eigenvalue}
Consider the random substitution $\vartheta \colon a\mapsto \{bb\}, b\mapsto \{abaaba, ababaa\}.$
Its substitution matrix is 
$\begin{psmallmatrix}
0&4\\
2&2\\
\end{psmallmatrix}$
with eigenvalues $\lambda_1=4$ and $|\lambda_2|=2 >1$. 
Here $\gcd\{|\vartheta(a_i)|:a_i\in \mathcal{A}\}=
\gcd\{2,6\}=2>1$. 
Using the fact that the word $bb$ can only come from a 
substituted $a$, one can easily show that $\vartheta$ is locally recognisable.
Thus, as a consequence of Theorem \ref{mainthm-non-pisot-mixingtogcd}, the subshift $X_\vartheta$ is not topologically mixing. This can also
be seen directly, since the spacing between any two 
appearances of $bb$ must be a 1-inflation word and so must
have even length. 
\end{example}

As previously noted, the random Fibonacci substitution is not locally recognisable. Likewise, neither are any of the random metallic means substitutions \cite{Zeckendorf}.
However, we do not need local recognisability to prove 
a lack of topological mixing. By 
Theorem \ref{pisot-recogwords-nonmixing},
we just need to show that 
there exist recognisable words at a sufficiently high level.
For the random Fibonacci substitution, we will do more,
showing that recognisable words exist at {\em all} levels. 

\begin{example}\label{eg:randfib-RW}
Let $\vartheta$ be the random Fibonacci substitution given by
$$\vartheta\colon a  \mapsto \{ab,ba\}, b \mapsto \{a\}.$$
Note that all four 2-letter words are $\vartheta$-legal.  
We begin with the seeds $F_0=a|a$ and $F_1=ab|ba$. We
successively construct
words $F_n =L_n|R_n \in \vartheta(F_{n-1})$ as follows, where the bar divides $F_n$ into level-$n$ super-words. 
Let $\phi$ be a map that
\begin{itemize}
    \item Replaces each $b$ with an $a$. 
    \item Replaces each $a$ that is followed by a $b$ with
    $ba$.
    \item Replaces each $a$ that is preceded by a $b$ with $ab$.
    \item Replaces each $a$ that is preceded by an $a$ with 
    $ab$ if the preceding $a$ was replaced with $ba$, and with $ba$ if the preceding $a$ was replaced with $ab$. 
\end{itemize}
These rules are consistent as long as the $b$'s come in pairs (except for a possible lone $b$ at the 
beginning or end 
of a word), separated by an even number of $a$'s. 
This is true
for $F_0$ and $F_1$, and follows inductively from the fact that $ba^{2k}b$ becomes $a(abba)^ka$.

We claim that all of the words $F_n$ are recognisable inflation words of level 1 
with radius 0, with $F_{n-1}$ being the
unique root of $F_n$. By induction, $F_n$ is then a recognisable 
inflation word of level $n$, again with radius 0, with 
unique root $aa$. 
The first few iterates and their inflation word decompositions and corresponding roots are given in the following table.
$$\begin{array}{|c|c|c|}
\hline
n  &    F_n:= L_n \, | \, R_n  & D_{\vartheta^n}(F_n) \\ \hline
1 & ab \, | \, ba & \{([ab,ba],aa)\} \\
2 & baa \, | \, aab & \{([baa,aab],aa)\} \\
3 & aabba \, | \,abbaa & \{([aabba,abbaa],aa)\}\\
4 & abbaaaab \, | \, baaaabba & \{([abbaaaab,baaaabba],aa)\} \\
5 & baaaabbaabbaa \, | \, aabbaabbaaaab & \{([baaaabbaabbaa,aabbaabbaaaab],aa)\}\\
\hline
\end{array}$$

We will show by hand that $F_1$, $F_2$ and $F_3$ have unique 1-decompositions and then generalise to $F_n$. 

Since $b$'s come only from substituting $a$'s, the only way that $bb$ can ever appear in a $\vartheta$-legal word is as
part of $abba$, decomposed as $([ab,ba],aa)$. That
is, $F_1$ only decomposes as $\phi(F_0)$. Note that 
this also shows that $bbb$ is not $\vartheta$-legal. 

$F_2$ contains $aaaa$. Since the middle two $a$'s are not
adjacent to $b$'s, they must come from substituting $b$'s. 
Since $bbb$ is not $\vartheta$-legal, the outer two letters
of the root must be $a$'s, so our root is $abba=F_1$.
This argument also shows that $aaaaa$ is not $\vartheta$-legal, since the middle three $a$'s would have
to all come from $b$'s and $bbb$ is not $\vartheta$-legal. 

$F_3$ contains $abbaabba$. As previously noted, each $abba$ can only come from substituting two $a$'s. Since $aaaaa$ is 
not $\vartheta$-legal, the outer letters of the root must be $b$'s, so the root is $baaaab = F_2$.

When $n>3$, we simply repeat these arguments on each 
portion $baab$ or $baaaab$ of $F_n$. Since $baab$ is preceded and followed by $b$'s, it can only decompose as 
$([ba,ab], aa)$, and since $baaaab$ contains two central $a$'s that can only come from $b$'s, this piece can 
only decompose as $([ba,a,a,ab], abba)$. The
decomposition of any remaining prefixes and suffixes is determined by
the fact that the root cannot contain three successive $b$'s or five successive $a$'s. Thus the only decomposition of $F_n$ is as $\phi(F_{n-1})$.
As $\vartheta$ is irreducible Pisot and admits recognisable words at all levels, it follows from Theorem \ref{pisot-recogwords-nonmixing} that the random Fibonacci subshift is not topologically mixing.

Incidentally, we can also build recognisable bi-infinite words as limits of the 
$F_n$'s. These words form a (deterministic) substitution subshift in their own right, a morphic 
copy of the Fibonacci subshift, with $a$ replaced by $A=baaaab$ and $b$ replaced by $B=baab$. 
This is because the map $\phi$ sends $baaaab$ to $aabbaabbaa$ and sends $baab$ to $aabbaa$.
After applying a conjugation that removes $baa$ from the end of each substituted word and places
it at the beginning, this is equivalent to $A \to AB$, $B \to A$. The fact that such words 
are recognisable under $\vartheta$, and not merely under $\phi$, follows from the same 
arguments as with the finite words $F_n$. Specifically, each $abba$ can only come from $aa$ 
and all of the remaining $a$'s can only come from $b$'s. 
\end{example}

\section{random substitution tiling spaces}\label{SEC:R-actions}

Subshifts are one way to build a dynamical system from a random substitution. An equally valid way
is to build a space of tilings on which $\mathbb{R}$ acts by translation \cite{Baake,S:book}. 
To each letter $a_i \in \mathcal{A}$ we associate an interval of length $\ell(a_i)$, which we 
sometimes abbreviate as $\ell_i$. We call such an interval a {\em tile}. 
To each word $u=u_1\cdots u_n$ we associate an interval of length $\sum_{i=1}^n \ell(u_i)$
obtained by laying tiles of type $u_1, u_2, \ldots, u_n$ end to end. We call such a concatenation
of tiles a {\em patch}. A patch associated to an $n$-super-word is called an $n$-{\em supertile}.
A patch is called $\vartheta$-legal if it is contained in a supertile of some order. A {\em tiling}
is a bi-infinite patch. The {\em tiling space} $\Omega_\vartheta$ associated to the random
substitution $\vartheta$ is the set of tilings with the property that every finite patch is 
$\vartheta$-legal. 

The group $\mathbb{R}$ acts on $\Omega_\vartheta$ by translation. Specifically, if $\mathcal{T}$ is 
a tiling and $t \in \mathbb{R}$, we define $T^t(\mathcal{T})$ to be the tiling obtained by moving
all tiles in $\mathcal{T}$ a distance $t$ to the left. (From the point of view of the tiles, 
this is equivalent to moving the origin a distance $t$.) Instead of explicitly invoking the
group action $T$, we usually just call the result $\mathcal{T}-t$. 

Let $P$ be a finite
patch, understood to lie at a specific location, and let $U$ be an open subset of $\mathbb{R}$. 
Then 
$$\mathcal{Z}_{P,U} = \left\{ \mathcal{S} \in \Omega_\vartheta \mid P \subset \mathcal{S}-y 
\hbox{ for some } y\in U \right\}$$
is called a {\em cylinder set}. By shifting the open set $U$, we can assume, without loss of 
generality, that the left endpoint of $P$ is at the origin. These cylinder sets form a basis
for the topology of $\Omega_\vartheta$. This topology is also the metric topology
coming from the {\em big box metric}, in which two tilings are considered $\epsilon$-close if, 
after translation by up to $\epsilon$, they agree on the interval $[-\epsilon^{-1}, \epsilon^{-1}]$.

The definition of topological mixing for $\mathbb{R}$ actions is similar to that for subshifts.
\begin{definition} The dynamical system $(\Omega_\vartheta, T)$ is topologically mixing if, for
any two open sets $\mathcal{Z}_1$ and $\mathcal{Z}_2$, there is a number $R$ such that, for all
$t>R$, $T^t(\mathcal{Z}_1)\cap \mathcal{Z}_2 \neq \emptyset$. 
\end{definition}
Although this definition is phrased only in terms of large positive values of $t$, topological
mixing is also a property of $T^t$ for $t$ large and negative, since switching the roles
of $\mathcal{Z}_1$ and $\mathcal{Z}_2$ is equivalent to changing the sign of $t$. 

For subshifts, topological mixing is related to the number of letters in possible
return words $w$ between $\vartheta$-legal words $u$ and $v$. That is, the possible lengths of words
$w$ such that $uwv$ is $\vartheta$-legal. Topological mixing for tiling spaces boils down to 
studying the geometric lengths of the {\em patches} associated to those return words. 
Specifically, let $S_{u,v}$ be the set of lengths of all patches associated to words $uw$, where
$uwv$ is $\vartheta$-legal. 

\begin{definition} A discrete subset $S \subset \mathbb{R}$ is said to be {\em asymptotically
dense} if, for each $\epsilon>0$ there exists a number $R_\epsilon$ such that every $t>R_\epsilon$
is within $\epsilon$ of an element of $S$.
\end{definition}

\begin{proposition} The tiling space $\Omega_\vartheta$ is topologically mixing if and only if, 
for all $\vartheta$-legal words $u$ and $v$, $S_{u,v}$ is asymptotically dense.
\end{proposition}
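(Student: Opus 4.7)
The plan is to unwind the definition of topological mixing for the tiling $\mathbb{R}$-action in terms of cylinder sets and then translate it into a combinatorial statement about geometric separations between occurrences of $P_u$ and $P_v$ in $\vartheta$-legal tilings. Since the cylinder sets $\mathcal{Z}_{P,U}$ form a basis for the topology, it suffices to check the mixing condition on pairs of cylinders. Unpacking the definitions, the intersection $T^t(\mathcal{Z}_{P_u,I_1}) \cap \mathcal{Z}_{P_v,I_2}$ is non-empty precisely when some tiling $\mathcal{T} \in \Omega_\vartheta$ has $P_u$ at a position $y_1 \in I_1$ and $P_v$ at a position $y_2 + t$ with $y_2 \in I_2$; equivalently, the geometric separation from the left endpoint of $P_u$ to that of $P_v$ in a legal tiling must lie in the open interval $t + I_2 - I_1$ (Minkowski difference). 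Such a separation is exactly the length of the patch associated with $uw$ for some $\vartheta$-legal word $uwv$, and is therefore an element of $S_{u,v}$; conversely, every element of $S_{u,v}$ arises in this way, since every $\vartheta$-legal patch extends to a tiling of $\Omega_\vartheta$.

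For the forward direction, assume $\Omega_\vartheta$ is topologically mixing, fix legal words $u, v$, and take $\varepsilon > 0$. Apply mixing to the pair of cylinders $\mathcal{Z}_{P_u, (-\varepsilon/2, \varepsilon/2)}$ and $\mathcal{Z}_{P_v, (-\varepsilon/2, \varepsilon/2)}$: for every sufficiently large $t$ there is a tiling exhibiting $P_u$ and $P_v$ at a separation within $\varepsilon$ of $t$, and this separation is the required element of $S_{u,v}$. Hence $S_{u,v}$ is asymptotically dense.

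Conversely, suppose $S_{u,v}$ is asymptotically dense for every pair of $\vartheta$-legal words. Given non-empty open sets $\mathcal{Z}_1, \mathcal{Z}_2 \subset \Omega_\vartheta$, shrink them to basic cylinders $\mathcal{Z}_{P_u, I_1} \subset \mathcal{Z}_1$ and $\mathcal{Z}_{P_v, I_2} \subset \mathcal{Z}_2$ of radii $\delta_1, \delta_2$, and set $\varepsilon := \delta_1 + \delta_2$. Asymptotic density yields, for every sufficiently large $t$, an element of $S_{u,v}$ within $\varepsilon$ of the centre of $t + I_2 - I_1$, hence inside that open interval. The corresponding $\vartheta$-legal word $uwv$ extends to a tiling which, after a translation chosen to place $P_u$ inside $I_1$, witnesses $T^t(\mathcal{Z}_1) \cap \mathcal{Z}_2 \neq \varnothing$.

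The only real obstacle is bookkeeping: organising the reduction from arbitrary open sets to cylinders, keeping the radii $\delta_1, \delta_2$ aligned with the tolerance $\varepsilon$ appearing in the definition of asymptotic density, and translating between the two descriptions ``$P_u$ at $y_1$, $P_v$ at $y_2 + t$'' and ``separation equal to $y_2 + t - y_1$''. Once this dictionary is fixed and the elementary fact that every $\vartheta$-legal patch extends to a tiling in $\Omega_\vartheta$ is invoked, both directions follow immediately from the definitions.
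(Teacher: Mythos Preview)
Your proposal is correct and follows essentially the same approach as the paper: both reduce to basic cylinder sets $\mathcal{Z}_{P,(-\epsilon,\epsilon)}$, translate the intersection $T^t(\mathcal{Z}_1)\cap\mathcal{Z}_2\neq\varnothing$ into the statement that $t$ lies within $\epsilon_1+\epsilon_2$ of an element of $S_{u,v}$, and then observe that this holding for all sufficiently large $t$ and all $\epsilon_1,\epsilon_2>0$ is exactly asymptotic density of $S_{u,v}$. The paper presents the equivalence in one paragraph rather than splitting into two directions, and it does not explicitly state that every $\vartheta$-legal patch extends to a tiling (which you rightly flag as needed), but otherwise the arguments coincide.
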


\begin{proof}
Without loss of generality, we can check topological mixing on sets of the form
$\mathcal{Z}_{P,(-\epsilon,\epsilon)}$, since the definition is invariant under translation of
the sets and since intervals of size $2\epsilon$ form a basis for the topology of $\mathbb{R}$. 
We therefore consider sets $\mathcal{Z}_1 = \mathcal{Z}_{P_1, (-\epsilon_1,\epsilon_1)}$
and $\mathcal{Z}_2 = \mathcal{Z}_{P_2, (-\epsilon_2,\epsilon_2)}$, where $P_1$ is a 
patch starting at the origin 
based on the $\vartheta$-legal word $u$ and $P_2$ is a patch starting at the origin based on the
$\vartheta$-legal word $v$. The open set $\mathcal{Z}_1-t$ consists of all tilings that exhibit
a copy of $P_1$ starting within $\epsilon_1$ of $-t$, while $\mathcal{Z}_2$ consists of 
all tilings that exhibit a copy of $P_2$ starting within $\epsilon_2$ of the origin. These sets
intersect if and only if there is a tiling in which the left endpoints of 
$P_1$ and $P_2$ are spaced between
$t-(\epsilon_1+\epsilon_2)$ and $t+(\epsilon_1+\epsilon_2)$ apart. In other words, if and only if
$t$ is within $\epsilon_1+\epsilon_2$ of an element of $S_{u,v}$. The condition that $T^t(\mathcal(Z)_1)\cap \mathcal(Z)_2 \neq \emptyset$ for all sufficiently large $t$ for all 
choices of $\epsilon_1$ and $\epsilon_2$ is equivalent to $S_{u,v}$ being asymptotically dense. 
\end{proof}

Our results for subshifts, and those of \cite{Kenyon}, carry over to tiling spaces with one key adjustment. The condition that 
$\gcd\left\{|\theta^n(a)|:a\in \mathcal{A}\right\}=1$ for all $n\ge 1$ is replaced with the 
existence of letters $a$ and $b$ such that the ratio $\ell(a)/\ell(b)$ is irrational. 

\begin{theorem}[\!\!{\cite[Prop.\ 1.1]{Kenyon}}]\label{thm:KSS-R-action}
Let $\theta$ be a primitive aperiodic deterministic substitution on an alphabet $\mathcal{A}$.
If the tiling space $\Omega_{\theta}$ is topologically mixing, then there exist letters 
$a, b \in \mathcal{A}$ such that $l(a)/l(b)$ is irrational.
\end{theorem}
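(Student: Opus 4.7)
My plan is to prove the contrapositive: assuming every ratio $\ell(a)/\ell(b)$ for $a, b \in \mathcal{A}$ is rational, I will show that $\Omega_\theta$ is not topologically mixing. The essential observation will be that commensurability of tile lengths forces every patch length to lie in a uniformly discrete subset of $\mathbb{R}$, which by the Proposition immediately preceding the theorem is incompatible with topological mixing.

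The first step is to extract from the commensurability hypothesis a positive real number $L$ with the property that each tile length $\ell(a_i)$ lies in $L \mathbb{Z}_{>0}$. Writing, for instance, $\ell(a_i)/\ell(a_1) = p_i/q_i$ in lowest terms and taking $L = \ell(a_1)/\operatorname{lcm}(q_1, \ldots, q_d)$ yields such an $L$. Since a finite patch has length equal to the sum of its constituent tile lengths, every patch length then lies in $L\mathbb{Z}$, and in particular $S_{u,v} \subset L\mathbb{Z}$ for any pair of $\theta$-legal words $u, v$.

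The second step is to observe that $L\mathbb{Z}$ is not asymptotically dense in the sense defined above: for $\epsilon = L/3$, arbitrarily large real numbers of the form $(k + \tfrac{1}{2})L$ with $k$ a positive integer lie at distance $L/2 > \epsilon$ from every element of $L\mathbb{Z}$, so no threshold $R_\epsilon$ can witness asymptotic density. Consequently $S_{u,v}$, being a subset of $L\mathbb{Z}$, is not asymptotically dense either, and the Proposition gives that $\Omega_\theta$ is not topologically mixing.

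I do not expect any significant obstacle. Notably, in contrast to the subshift analogue Theorem \ref{mainthm-non-pisot-mixingtogcd}, no local recognisability hypothesis appears here, because the constraint that patch lengths lie in $L\mathbb{Z}$ is purely geometric and applies to \emph{every} patch rather than only to those obtained as concatenations of whole supertiles. The aperiodicity assumption in the statement is inherited from the deterministic subshift setting of \cite{Kenyon} and is not used in this direction of the implication.
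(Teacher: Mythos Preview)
Your proof is correct and follows essentially the same approach as the paper's own proof: both argue the contrapositive by finding a common ``basic length'' $\ell_0$ (your $L$) dividing all tile lengths, so that every element of $S_{u,v}$ lies in $\ell_0\mathbb{Z}$ and hence $S_{u,v}$ cannot be asymptotically dense. Your version is slightly more explicit in constructing $L$ and verifying non-density, and your observation that aperiodicity is unused in this direction is accurate.
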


\begin{proof}
If all the tile lengths are rational multiples of one another, then there is a basic length $\ell_0$
such that all tile lengths are multiples of $\ell_0$. But then every element of $S_{u,v}$ 
(where $u$ and $v$ are arbitrary $\vartheta$-legal words) is a 
multiple of $\ell_0$, so $S_{u,v}$ is not asymptotically dense. 
\end{proof}

Kenyon, Sadun and Solomyak \cite{Kenyon} determined when a tiling space with a 
2-letter deterministic substitution, with second eigenvalue either larger or smaller than 1,
is topologically mixing. (The borderline case where $|\lambda_2|=1$ and $\ell(a)/\ell(b)$ 
is irrational is subtle. There 
are examples where the tiling space is topologically mixing and examples where it isn't.) 

\begin{theorem}[\!\!{\cite[Thm.\ 1.2]{Kenyon}}]\label{thm:KSS-iff-R-action}
Let $\theta$ be a primitive aperiodic deterministic substitution defined on a two-letter alphabet $\mathcal{A}=\{a,b\}$. If $|\lambda_2|<1$ or if the ratio $\ell(a)/\ell(b)$ of the two tile lengths
is rational, then the tiling space $\Omega_\theta$ is not  topologically mixing. If $|\lambda_2|>1$ and the ratio is irrational, then the tiling space is topologically mixing.
\end{theorem}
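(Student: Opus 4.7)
The plan is to apply the proposition immediately preceding the theorem to recast topological mixing of $\Omega_\theta$ as the requirement that the length set $S_{u,v}$ be asymptotically dense for every pair of $\theta$-legal words $u$ and $v$. The two halves of the theorem then become separate geometric analyses of $S_{u,v}$ under the respective hypotheses on $|\lambda_2|$ and $\ell(a)/\ell(b)$.

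For the non-mixing direction, the rational-ratio case is immediate from Theorem \ref{thm:KSS-R-action}: every element of $S_{u,v}$ is an integer multiple of a common $\ell_0>0$, so its gaps are bounded below by $\ell_0$. When $|\lambda_2|<1$, I would invoke Theorem \ref{balancedmain} (with the remark after Theorem \ref{thm:pisot-balanced} handling the $\lambda_2=0$ case) to conclude that $X_\theta$ is $C$-balanced. For each combinatorial length $L$, the abelianisations of length-$L$ legal words then form a set of bounded cardinality, confined within $C$ of $(f_a L, f_b L)$; the resulting patch lengths in $S_{u,v}$ lie in a window of fixed width around $\ell(u)+L\bar\ell$, where $\bar\ell=f_a\ell(a)+f_b\ell(b)>0$. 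Varying $L$ over $\mathbb{N}$ then realises $S_{u,v}$ as a Delone-like set whose gaps are bounded below by a positive constant, ruling out asymptotic density.

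For the mixing direction, assume $|\lambda_2|>1$ and $\ell(a)/\ell(b)$ is irrational. The key geometric input is that $|\lambda_2|>1$ forces the discrepancy of letters within length-$L$ return words between $u$ and $v$ to grow without bound in $L$, so the achievable abelianisations of length-$L$ return words span an arithmetic progression in the $(1,-1)$ direction whose length grows with $L$. This produces, for each fixed $L$, a long arithmetic progression of patch lengths with step $|\ell(a)-\ell(b)|$. Varying $L$ shifts these progressions by $\bar\ell$ per unit increment, and a Weyl equidistribution argument, exploiting that irrationality of $\ell(a)/\ell(b)$ makes $\bar\ell$ irrational modulo $|\ell(a)-\ell(b)|$ after a short algebraic check using the PF frequencies $f_a, f_b$, then yields $\epsilon$-density of $S_{u,v}$ beyond some threshold depending on $u, v$ and $\epsilon$.

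The main obstacle is the mixing direction, and specifically constructing return words with prescribed abelianisation modulo the $\lambda_2$-direction while preserving legality of the concatenation $uwv$. The strategy is to begin with a single return word $w_0$ produced by primitivity and then perform local swaps inside supertiles that keep the combinatorial length and the flanking words $u$ and $v$ intact while shifting $|w|_a$ by small integer amounts; the $|\lambda_2|>1$ hypothesis ensures that such swaps span enough of the abelianisation lattice for the equidistribution step to apply uniformly in $u, v$. A mild supplementary argument is required to handle any degenerate algebraic coincidences where $\bar\ell/|\ell(a)-\ell(b)|$ would happen to be rational.
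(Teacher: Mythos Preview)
The paper does not prove this theorem at all: it is stated as a quotation of \cite[Thm.\ 1.2]{Kenyon} and used as a black box in the subsequent arguments (Theorems \ref{pisot-R-action-nonmixing} and \ref{mainthm-non-pisot-R-action}). There is therefore no proof in the paper to compare your proposal against.

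That said, a few comments on your sketch itself. Your non-mixing direction is essentially sound, but the conclusion ``gaps are bounded below by a positive constant'' is stronger than what you actually establish. What follows cleanly from $C$-balancedness is that the number of possible patch lengths below a threshold $T$ grows at most linearly in $T$ (at most $(C+1)$ abelianisations per combinatorial length, and at most $T/\ell_{\min}$ combinatorial lengths), which already rules out asymptotic density; this is exactly the counting argument the paper gives later for the random $C$-balanced case. The windows around $L\bar\ell$ can in principle overlap, so a uniform lower gap bound needs an extra step.

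Your mixing direction is where the real work lies, and your outline is too optimistic in two places. First, the claim that $\bar\ell/|\ell(a)-\ell(b)|$ is irrational whenever $\ell(a)/\ell(b)$ is irrational is not a ``short algebraic check'': the frequencies $f_a,f_b$ live in $\mathbb{Q}(\lambda_1)$, not in $\mathbb{Q}$, and the relation between $\bar\ell$ and $\ell(a)-\ell(b)$ depends on both the tile lengths and the PF eigenvector in a nontrivial way. Second, producing legal return words realising a long arithmetic progression of abelianisations via ``local swaps inside supertiles'' is precisely the delicate combinatorial content of the original argument in \cite{Kenyon}; for a deterministic substitution there need not be any local freedom to swap, and the actual proof proceeds instead by analysing how the set of return-word abelianisations grows under iterated substitution, using the $|\lambda_2|>1$ hypothesis to show this set eventually contains translates of arbitrarily long intervals of the $\mathbb{Z}$-lattice in the $(1,-1)$ direction. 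Your high-level picture is right, but the mechanism you propose for generating the progression does not match how it is actually done.
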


We now generalise these results to the random setting. Many of the arguments are the same as 
for subshifts and as such will be abbreviated. 

\begin{theorem}
Let $\Omega$ be a tiling space whose allowed sequences of tiles are given by a 
$C$-balanced subshift $X$. Then $\Omega$ is not topologically mixing.
\end{theorem}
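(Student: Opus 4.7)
The plan is to invoke the preceding proposition that $\Omega$ is topologically mixing if and only if the return-length set $S_{u,v}$ is asymptotically dense for every pair of legal words $u,v \in \mathcal{L}(X)$. I will show the contrapositive by establishing, for any such $u$ and $v$, an upper bound of the form $|S_{u,v} \cap [0,T]| \le cT + O(1)$ with a slope $c$ too small to be compatible with asymptotic density.

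The key input is $C$-balancedness, which yields two quantitative consequences. First, exactly as in Step~2 of the proof of Theorem~\ref{thm:No-mixing-balanced}, the number of abelianisations realised by length-$n$ words in $\mathcal{L}(X)$ is at most $K := (C+1)^{d-1}$, independently of $n$, where $d = |\mathcal{A}|$; hence so is the number of distinct patch lengths $L(w) := \sum_{i=1}^{|w|}\ell(w_i)$ among length-$n$ legal words. Second, by Theorem~\ref{thmbalanced}, each letter $a$ has a uniform frequency $f_a$ with uniform discrepancy bound $\bigl||w|_a - f_a|w|\bigr|\le B$, so with $\bar\ell := \sum_a f_a \ell(a) > 0$ and $M := B\sum_a \ell(a)$ we get $\bigl|L(w) - |w|\bar\ell\bigr| \le M$ for every $w \in \mathcal{L}(X)$.

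Combining these, any element of $S_{u,v} \cap [0,T]$ is $L(u) + L(w)$ for some $w \in \mathcal{L}(X)$ with $|w| \le (T + M - L(u))/\bar\ell$, and each such length contributes at most $K$ distinct values, giving
\[
|S_{u,v} \cap [0,T]| \le \frac{K\,(T + M - L(u))}{\bar\ell} + K.
\]
Now suppose for contradiction that $S_{u,v}$ were asymptotically $\epsilon$-dense past some threshold $R_\epsilon$. Then consecutive points of $S_{u,v}$ in $[R_\epsilon,\infty)$ differ by at most $2\epsilon$, so $|S_{u,v} \cap [R_\epsilon, T]| \ge (T - R_\epsilon)/(2\epsilon) - 1$. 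Letting $T \to \infty$ and comparing slopes forces $\epsilon \ge \bar\ell/(2K)$. Any choice $\epsilon < \bar\ell/(2K)$ then contradicts asymptotic $\epsilon$-density, so $S_{u,v}$ is not asymptotically dense, and the cited proposition tells us $\Omega$ is not topologically mixing.

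The conceptual heart of the argument is the first observation: $C$-balancedness turns a continuous geometric question about tile lengths into a finite combinatorial statement about abelianisations of length-$n$ words, with the count $K$ uniform in $n$. After that, the argument is a linear counting comparison, and the only bookkeeping concerns the constants $B$, $K$, $M$, $\bar\ell$; I do not anticipate any genuine obstacle beyond ensuring $\bar\ell > 0$ (which is automatic because $\sum_a f_a = 1$ and each $\ell(a) > 0$) and tracking the $O(1)$ terms carefully enough to extract a strict inequality in the limit.
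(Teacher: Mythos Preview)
Your proposal is correct and follows essentially the same argument as the paper: both bound the number of abelianisations of length-$n$ legal words by $(C+1)^{d-1}$ and conclude that $|S_{u,v}\cap[0,T]|$ grows at most linearly in $T$, which precludes asymptotic density. The only difference is cosmetic---you control the relevant word lengths via the average tile length $\bar\ell$ extracted from bounded discrepancy (Theorem~\ref{thmbalanced}), whereas the paper uses the more direct observation that a patch of geometric length at most $T$ contains at most $T/\ell_1$ tiles, with $\ell_1$ the shortest tile length, thereby avoiding any appeal to uniform frequencies.
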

\begin{proof}
As in the proof of Theorem \ref{thm:No-mixing-balanced}, there are at most $(C+1)^{d-1}$ abelianisations
of words with a fixed number of letters, where $d$ is the size of our alphabet. 
If $\ell_1$ is the length of the shortest tile, then any patch
of length $L$ has at most $L/\ell_1$ letters. Thus there are at most $L(C+1)^{d-1}/\ell_1$ possible 
patch lengths of size $L$ or less. In particular, the number of elements of $S_{u,v}$ of size $L$ or 
less grows at most linearly with $L$, so $S_{u,v}$ cannot be asymptotically dense. 
\end{proof}

Since random Pisot substitutions are $C$-balanced, we have an immediate corollary that generalises
half of Theorem \ref{thm:KSS-iff-R-action}.
We highlight that this result is independent of any recognisability assumptions on the random substitution.
\begin{theorem}\label{pisot-R-action-nonmixing}
Let $\vartheta$ be a primitive compatible random substitution on an alphabet $\mathcal{A}=\{a_1,\ldots, a_d\}$ with second eigenvalue $\lambda_2$ less than $1$ in modulus.
Then the associated RS-tiling space $\Omega_{\vartheta}$ is not topologically mixing.
\end{theorem}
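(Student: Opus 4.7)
The plan is to obtain this result as an immediate combination of the two preceding results in this section. Under the hypothesis $|\lambda_2|<1$, the remark following Theorem \ref{thm:pisot-balanced} asserts that the RS-subshift $X_\vartheta$ is $C$-balanced, and the theorem proved immediately above the current statement asserts that any tiling space whose underlying subshift is $C$-balanced fails to be topologically mixing. Chaining the two gives the conclusion at once, and, as stated in the theorem, no recognisability hypothesis is needed because the combinatorial-counting argument behind the preceding theorem is entirely insensitive to how inflation-word decompositions behave.

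What requires care is making the first appeal rigorous, since the remark only sketches the extension from the irreducible Pisot case to the general $|\lambda_2|<1$ case. To fill this in, I would revisit the proof of Theorem \ref{thm:pisot-balanced}, which relied on diagonalisability of $M_\vartheta$ to bound the discrepancy of each letter in a level-$n$ super-word by $D|\lambda_2|^n$. Here $\lambda_1$ is still simple by Perron--Frobenius, and every other eigenvalue has modulus strictly less than $1$ by assumption. Passing to the Jordan basis, any Jordan block of size $k+1$ at an eigenvalue $\mu$ contributes at most $O(n^k|\mu|^n)$ to the level-$n$ discrepancy, which remains geometrically summable when $0<|\mu|<1$ and actually vanishes for $n$ larger than the block size when $\mu=0$. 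The same Dumont--Thomas-style telescoping used in the proof of Theorem \ref{thm:pisot-balanced} then yields a uniform bound on the discrepancy of any $\vartheta$-legal word, which is $C$-balancedness by Theorem \ref{thmbalanced}.

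With $C$-balancedness of $X_\vartheta$ in hand, the preceding theorem concludes that $\Omega_\vartheta$ is not topologically mixing, its content being that $C$-balancedness forces the number of abelianisations of words of a given length to be uniformly bounded, hence the number of realisable patch lengths up to $L$ grows at most linearly with $L$, so the return-length sets $S_{u,v}$ cannot be asymptotically dense. The main obstacle in the plan is the linear-algebra bookkeeping around possible Jordan blocks at small eigenvalues, but this is a routine modification of the argument already in place, and no new combinatorial or dynamical ingredients are needed beyond what is already proved.
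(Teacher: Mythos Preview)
Your proposal is correct and follows exactly the paper's own route: the paper presents Theorem~\ref{pisot-R-action-nonmixing} as an immediate corollary of the preceding theorem on $C$-balanced tiling spaces together with the $C$-balancedness of substitutions with $|\lambda_2|<1$ (Theorem~\ref{thm:pisot-balanced} and its subsequent remark). In fact you go slightly further than the paper by spelling out the Jordan-block bookkeeping that the remark only gestures at, which is a welcome addition rather than a deviation.
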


Finally, we generalise the remainder of Theorem \ref{thm:KSS-iff-R-action}:
\begin{theorem}\label{mainthm-non-pisot-R-action}
Let $\vartheta$ be a primitive compatible random substitution on a two-letter alphabet 
with second eigenvalue $\lambda_2$ greater than $1$ in modulus, such that the ratio of 
the two tile lengths is irrational. Then $\Omega_\vartheta$ is topologically mixing. 
\end{theorem}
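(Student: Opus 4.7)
The plan is to mirror the proof of Theorem~\ref{mainthm-non-pisot-iff} but in the geometric setting, reducing topological mixing for $\Omega_\vartheta$ to topological mixing for basic tiling subspaces, where the deterministic result Theorem~\ref{thm:KSS-iff-R-action} can be applied.

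First I would reformulate topological mixing using the criterion established just before the statement: it suffices to show that $S_{u,v}$ is asymptotically dense for every pair of $\vartheta$-legal words $u, v$. Fix two such words. By primitivity of $M_\vartheta$, there exist a natural number $k$ (large enough that both $u$ and $v$ appear as subwords of some $k$-super-words) and a single legal $k$-super-word containing both $u$ and $v$ separated by some word $w$, so that $uwv$ is $\vartheta$-legal. Following the construction recalled at the start of Section~\ref{SEC:main-results}, let $\theta := \theta_{uwv}$ be a marginal of $\vartheta^k$ such that $uwv \in \mathcal{L}_\theta$. Then $X_\theta \subseteq X_\vartheta$ is a basic subspace whose associated tiling space $\Omega_\theta$ embeds naturally into $\Omega_\vartheta$ using the same tile lengths as $\vartheta$.

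Next I would verify that Theorem~\ref{thm:KSS-iff-R-action} applies to $\theta$. The substitution $\theta$ is a primitive deterministic substitution on the same two-letter alphabet, with substitution matrix $M_\theta = M_\vartheta^k$ and hence second eigenvalue $\lambda_2^k$, which has modulus strictly greater than $1$. By the corollary to Theorem~\ref{pro:periodicforms}, any primitive periodic two-letter deterministic substitution has second eigenvalue of modulus $0$ or $1$, so $\theta$ must be aperiodic. The ratio of the two tile lengths is inherited from $\vartheta$ and therefore irrational. Theorem~\ref{thm:KSS-iff-R-action} then yields that $\Omega_\theta$ is topologically mixing.

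Finally, topological mixing of $\Omega_\theta$ means, by the criterion preceding the statement, that the set $S^\theta_{u,v}$ of patch lengths of $\theta$-legal return patches from $u$ to $v$ is asymptotically dense in $\mathbb{R}$. Since any $\theta$-legal word of the form $uw'v$ is \emph{a fortiori} $\vartheta$-legal with the same associated patch length, we have $S^\theta_{u,v} \subseteq S^\vartheta_{u,v}$, so the latter is asymptotically dense as well. As $u, v$ were arbitrary, $\Omega_\vartheta$ is topologically mixing.

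I do not anticipate a serious obstacle: the only non-routine step is the appeal to aperiodicity of $\theta$, and this is already handled by the corollary to Proposition~\ref{pro:periodicforms} (exactly as in the proof of Lemma~\ref{aperiodsubs}). Notably, the argument never needs local recognisability, matching the fact that the geometric rigidity supplied by irrational tile length ratios replaces the combinatorial rigidity supplied by the $\gcd$ condition in Theorem~\ref{mainthm-non-pisot-iff}.
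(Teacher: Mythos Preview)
Your proof is correct and follows essentially the same approach as the paper's: pick a marginal $\theta$ of a power of $\vartheta$ in whose language both $u$ and $v$ lie, apply Theorem~\ref{thm:KSS-iff-R-action} to $\Omega_\theta$, and use $S^\theta_{u,v}\subseteq S^\vartheta_{u,v}$ to conclude asymptotic density. If anything, you are slightly more careful than the paper, which does not explicitly verify aperiodicity of $\theta$ before invoking Theorem~\ref{thm:KSS-iff-R-action}; your appeal to the corollary of Theorem~\ref{pro:periodicforms} (exactly as in Lemma~\ref{aperiodsubs}) fills that small gap.
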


\begin{proof}
Let $u$ and $v$ be $\vartheta$-legal words and let $\theta$ be a marginal of $\vartheta$ such
that $u$ and $v$ are $\theta$-legal. The second eigenvalue of $M_\theta$ is a positive power of 
$\lambda_2$ and so is larger than 1 in modulus. Applying Theorem \ref{thm:KSS-iff-R-action} to
$\Omega_\theta$, we see that $\Omega_\theta$ is topologically mixing, which implies that the 
set of lengths
of the return patches from $u$ to $v$ in $\Omega_\theta$ is asymptotically dense. But that set is a 
subset of $S_{u,v}$, so $S_{u,v}$ is asymptotically dense. Since this is true for all pairs $(u,v)$,
$\Omega_\vartheta$ is topologically mixing. 
\end{proof}

\section*{Acknowledgements}
It is a pleasure to thank M.~Baake, D.~Frettl\"{o}h, F.~G\"{a}hler and N.~Ma\~{n}ibo for helpful discussions.
EDM would like to acknowledge the support of the Alexander von Humboldt Foundation and Ateneo de Manila University.
DR was supported by the German Research Foundation (DFG), within the CRC 1283 at Bielefeld University, where the majority of the research was conducted.
GT would like to thank the Commission on Higher Education (CHED) of the Philippines for support.

\bibliography{tilings}
\bibliographystyle{jis}

\end{document}